\theoremstyle{plain}
\newtheorem{theorem}{Theorem}[section]
\newtheorem{lemma}[theorem]{Lemma}
\newtheorem{proposition}[theorem]{Proposition}
\newtheorem{corollary}[theorem]{Corollary}
\newtheorem{definition}[theorem]{Definition}
\theoremstyle{remark}
\newtheorem{remark}[theorem]{Remark}
\numberwithin{equation}{section}
\title{Beyond periodic revivals for linear dispersive PDEs}
\author{
Lyonell Boulton, George Farmakis and  Beatrice Pelloni \\
Heriot-Watt University \& Maxwell Institute  for the Mathematical Sciences\\ Edinburgh, Scotland
}
\begin{document}
\maketitle
\begin{abstract}
We study the phenomenon of revivals for the  linear Schr{\"odinger} and Airy equations over a finite interval, by considering several types of non-periodic boundary conditions.  In contrast with the case of the linear Schr\"odinger equation examined recently (which we develop further), we prove that, remarkably,  the Airy equation does not generally exhibit revivals even for boundary conditions very close to periodic. We also describe a new, weaker form of revival phenomena, present in the case of certain Robin-type boundary conditions for the linear Schr\"{o}dinger equation. In this weak revival, the dichotomy between the behaviour of the solution at rational and irrational times persists, but in contrast with the classical periodic case, the solution is not given by a finite superposition of copies of the initial condition. 
\end{abstract}

\section{Introduction}\label{Introduction}
The phenomenon of {\em revivals} in linear dispersive periodic problems, also called  in the literature {\em Talbot effect} or  {\em dispersive quantisation}, has been well-studied and is by now well-understood. It was discovered first experimentally in optics, then rediscovered several times by theoretical and experimental  investigations.  While the term has been used systematically and consistently by many authors, there is no consensus on a rigorous definition.  Several have described the phenomenon by stating that a given periodic time-dependent boundary value problem exhibits {\em revival at rational times} if the solution evaluated at a certain dense subset of times, see \eqref{Rational Time} below, is given by  finite superpositions of translated copies of the initial profile.  We will call this the {\em periodic revival property}. In particular, when the initial condition has a jump discontinuity at time zero, these discontinuities are propagated and remain present in the solution at each rational time. 

This  behaviour  at rational times should be contrasted with the behaviour at generic time, when the solution is known to be continuous as soon as the initial condition is of bounded variation.  Hence generically, while the dispersive propagation has a smoothing effect on any initial discontinuity,  this smoothing does not occur at rational times. Moreover, at generic times and for appropriate initial data,  the solution while continuous is nowhere differentiable. In fact its graph has a fractal dimension greater than 1 \cite{berry1996integer, rodnianski2000fractal}. There is therefore a dichotomy between generic times and the measure-zero set of rational times, as suggested also by the provocative title of \cite{kapitanski1999does}. 

In this paper we  examine the role of boundary conditions in supporting some form of  revival phenomenon. In order to illustrate the range of revival behaviour concretely, we focus on two specific linear PDEs of particular significance both from the mathematical point of view and in terms of applications. Namely, we will consider the linear Schr\"odinger equation with zero potential 
\begin{align}
    & iu_t(x,t)+u_{xx}(x,t)=0,
\label{linS} \tag{LS}
\end{align}
and the  Airy equation, also know as Stokes problem,
\begin{align}
&u_t(x,t)  -u_{xxx}(x,t)=0.
\label{airy} \tag{AI}
\end{align}
Both these PDEs will be posed on the interval $[0,2\pi]$ and we set specific boundary conditions either of pseudo-periodic or of Robin type.  These represent two essentially  different types  of boundary conditions. Indeed, in the pseudo-periodic case the boundary conditions couple the ends of the interval, just as in the periodic case, while in the Robin\ case, the boundary conditions are uncoupled. The  type of  revival property that we observe in the two cases strongly reflects this difference.   

Let $n$ denotes the order of spatial derivative in the PDE, hence  $n=2$ for \eqref{linS} and $n=3$ for \eqref{airy}. In the first part of the paper, following the work of \cite{olver2018revivals}, we will consider specific types of {\em pseudo-periodic} boundary conditions of the form\begin{equation}
\beta_k\partial_x^k u(0,t)=u(2\pi,t),\quad \beta_k\in\mathbb{C},\;\;k=0,1,...,n-1 .
\label{ppbc} \tag{PP}
\end{equation}
Of particular interest will be the case when all $\beta_k$  are equal, that is {\em quasi-periodic} boundary conditions of the form
\begin{equation}
\beta \partial_x^ku(0,t)=u(2\pi,t),\quad \beta\in\mathbb C,\;\;k=0,1,...,n-1 .
\label{qpbc} \tag{QP}
\end{equation}
In the second part of the paper, we will consider Schr\"{o}dinger's equation \eqref{linS} with the specific Robin boundary conditions given by
\begin{equation}
b u(x_{0},t) = (1-b) \partial_{x}u(x_{0},t),\  x_{0} = 0, \ \pi,\quad b \in [0,1].
\label{rbc} \tag{R}
\end{equation}
The case $b=0$ corresponds to Neumann and $b=1$ to Dirichlet boundary conditions. For these  special cases, the solution of the boundary value problem is obtained by even or odd extensions from the solution of an associated periodic problem. However, for $0<b<1$ the boundary value problem behaves very differently from a periodic one.

It is well established that the  periodic problem for any linear dispersive equations exhibits periodic revival (see Theorem~\ref{Linear Evolution Problems}  below). Moreover, subject to consistency conditions on the coefficients $\beta_k$, in \cite{olver2018revivals} it was shown that the periodic revival property holds in general for the equation \eqref{linS}-\eqref{ppbc}. Below, we give a new proof of the latter. Our arguments elucidate the mathematical reason for the persistence of the periodic revival property for the linear Schr{\"o}dinger equation \eqref{linS}, when subject to the fairly general class of boundary conditions \eqref{ppbc}. In particular, we show that all pseudo-periodic boundary conditions can be solved in terms of certain associated {\em periodic} problems. This is the content of Proposition \ref{Non-Self-Adjoint Correspondence Theorem} which also enables us to deduce from existing results for the periodic case that, at irrational times, any initial discontinuity is smoothed out. To be precise, even when the initial profile has jump discontinuities, the solution at irrational times becomes a continuous (though nowhere differentiable) function of the space variable. 

The spectral properties of pseudo-periodic and other non-periodic boundary value problems for \eqref{airy} were first examined in \cite{pelloni2005spectral}, where an explicit general formula for the solution was given. Below we show that, in stark contrast with \eqref{linS}, the quasi-periodic  Airy equation, \eqref{airy}-\eqref{qpbc}, in general does not  exhibit any form of revival at rational times. Indeed, the periodic revival property holds in this case {\em only for  values of the quasi-periodicity parameter}, $\beta=e^{2\pi i \theta}$ such that  $\theta\in\mathbb Q$. Remarkably, the latter defies the na\"ive expectation that the revival property carries onto the case of higher order PDEs, when the boundary conditions support it for the second order case. It also suggests that the general pseudo-periodic case for third-order PDEs, generically,  will not  exhibit revivals. 

In Section~\ref{Airy's Quasi-Periodic Problem} we prove the following result.
\begin{theorem}
\label{Airy Correspondence Theorem}
Fix $\theta \in [0,1)$ and consider Airy's equation \eqref{airy} with initial condition $u_{0}\in L^{2}(0,2\pi)$ and quasi-periodic boundary conditions \eqref{qpbc} where $\beta = e^{i2\pi\theta}$. Let $p$ and $q$ be co-prime and let 
	\begin{equation}
		\label{Airy correspondence IC}
	 v^{(p,q)}_{0}(x) = \mathcal{R}_{3}(p,q)\left[u_{0}(x) e^{-i\theta x}\right],
	\end{equation}
	where $\mathcal{R}_{3}(p,q)$ is the third order revival operator defined below in \eqref{Revival Operator}. Then, the solution at rational time $t_{\mathrm{r}}= 2\pi\frac{p}{q}$ is given by
\begin{equation}
	\label{Airy Correspondence}
	u(x,t_{\mathrm{r}}) = e^{-i t_{\mathrm{r}}\theta^{3}} e^{i\theta x} \mathcal{T}_{3\theta^{2}t_{\mathrm{r}}} v^{(p,q)}(x,3\theta t_{\mathrm{r}}).
\end{equation}
Here $\mathcal{T}_{s}$ is the translation operator (see \eqref{Periodic Translation Operator}) and  $v^{(p,q)}(x,t)$ denotes the solution of the periodic problem for Schr\"{o}dinger equation with initial condition $v^{(p,q)}_{0}$.
\end{theorem}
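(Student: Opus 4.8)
The plan is to diagonalise the quasi-periodic Airy evolution in its natural Fourier basis and then to read off, one factor at a time, how the cubic dispersion relation decomposes into the building blocks appearing in \eqref{Airy Correspondence}. First I would record that, as follows from the spectral analysis underlying the explicit solution formula of \cite{pelloni2005spectral}, the boundary conditions \eqref{qpbc} with $\beta = e^{i2\pi\theta}$ are diagonalised by the exponentials $e^{i(n+\theta)x}$, $n\in\mathbb{Z}$, on which the Airy generator acts through the dispersion relation $\omega(\mu)=\mu^{3}$. Hence the solution of \eqref{airy}--\eqref{qpbc} is
\[
u(x,t) = \sum_{n\in\mathbb{Z}} c_{n}\, e^{i(n+\theta)x}\, e^{-i(n+\theta)^{3} t},
\]
where the $c_{n}$ are the Fourier coefficients of $u_{0}(x)e^{-i\theta x}$.

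The heart of the argument is the binomial identity $(n+\theta)^{3} = n^{3} + 3\theta n^{2} + 3\theta^{2} n + \theta^{3}$, which splits the temporal phase into four factors of distinct character,
\[
e^{-i(n+\theta)^{3} t} = e^{-i\theta^{3} t}\cdot e^{-3i\theta^{2} n t}\cdot e^{-3i\theta n^{2} t}\cdot e^{-i n^{3} t}.
\]
After pulling out the gauge factor $e^{i\theta x}$ I would interpret these in turn: the $n$-independent term is the global phase $e^{-i\theta^{3}t}$; the term linear in $n$ is exactly the Fourier symbol of the translation operator $\mathcal{T}_{3\theta^{2}t}$; the quadratic term is the Fourier symbol of the free periodic Schr\"{o}dinger propagator at the rescaled time $s=3\theta t$; and the cubic term $e^{-in^{3}t}$ is the only genuinely third-order contribution. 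This produces, for every $t$, the identity
\[
u(x,t) = e^{-i\theta^{3} t}\, e^{i\theta x}\, \mathcal{T}_{3\theta^{2} t}\!\left[\sum_{n\in\mathbb{Z}} \big(c_{n}\, e^{-i n^{3} t}\big)\, e^{inx}\, e^{-i n^{2}(3\theta t)}\right],
\]
in which the bracketed expression is precisely the periodic Schr\"{o}dinger evolution, at time $3\theta t$, of the function whose Fourier coefficients are $c_{n}e^{-in^{3}t}$.

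The final step specialises to rational time $t_{\mathrm{r}} = 2\pi p/q$ with $\gcd(p,q)=1$, and this is where the revival structure enters. At such times the cubic symbol $e^{-in^{3}t_{\mathrm{r}}} = e^{-2\pi i p n^{3}/q}$ is periodic in $n$ with period $q$, since $(n+q)^{3}-n^{3} = q(3n^{2}+3nq+q^{2})$ is divisible by $q$; consequently the map $c_{n}\mapsto c_{n}e^{-in^{3}t_{\mathrm{r}}}$ is multiplication by a $q$-periodic Fourier multiplier, which is exactly the defining action of the third order revival operator $\mathcal{R}_{3}(p,q)$. Therefore the initial datum of the bracketed periodic Schr\"{o}dinger problem is $\mathcal{R}_{3}(p,q)[u_{0}(x)e^{-i\theta x}] = v^{(p,q)}_{0}$, the bracket itself is $v^{(p,q)}(x,3\theta t_{\mathrm{r}})$, and substituting $t=t_{\mathrm{r}}$ into the displayed identity yields \eqref{Airy Correspondence} directly.

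I expect the main difficulty to be bookkeeping rather than conceptual. One must fix once and for all the sign and scaling conventions in the definitions of $\mathcal{T}_{s}$, of $\mathcal{R}_{3}(p,q)$, and of the Fourier transform, and then verify that the linear and cubic symbols reproduce these conventions with the correct signs, since a misplaced sign would alter the translation amount or conjugate the revival multiplier. A secondary point requiring care is the justification of the termwise manipulations for general $u_{0}\in L^{2}(0,2\pi)$: the factorisation is transparent for trigonometric polynomials, and I would extend it to all of $L^{2}$ by density together with the $L^{2}$-boundedness (indeed unitarity) of each operator involved --- the translation $\mathcal{T}_{3\theta^{2}t_{\mathrm{r}}}$, the multiplier $\mathcal{R}_{3}(p,q)$, multiplication by $e^{\pm i\theta x}$, and the periodic Schr\"{o}dinger propagator --- so that both sides of \eqref{Airy Correspondence} depend continuously on $u_{0}$ in $L^{2}$.
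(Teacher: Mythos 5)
Your proposal is correct and follows essentially the same route as the paper's proof: expand the solution in the eigenbasis $e^{i(n+\theta)x}$, split the cubic phase via $(n+\theta)^{3}=n^{3}+3\theta n^{2}+3\theta^{2}n+\theta^{3}$, and identify the four factors with the global phase, the translation $\mathcal{T}_{3\theta^{2}t}$, the periodic Schr\"odinger propagator at time $3\theta t$, and (at rational times) the revival operator $\mathcal{R}_{3}(p,q)$ acting on $u_{0}e^{-i\theta x}$. The only point to make explicit is that the identification of the multiplier $e^{-in^{3}t_{\mathrm{r}}}$ with the physical-space operator $\mathcal{R}_{3}(p,q)$ is precisely the content of Lemma~\ref{Revival Operator Lemma}, which your $q$-periodicity observation correctly anticipates.
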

It is clear from representation \eqref{Airy Correspondence} that we can expect revivals for the Airy quasi-periodic problem only when $\theta\in\mathbb{Q}$, see Corollary~\ref{Airy QP Revival}. Indeed, if $\theta\not\in\mathbb{Q}$, then the time $3 \theta t_{\mathrm{r}}$ is an irrational time for the solution of a periodic problem of the Schr\"{o}dinger equation, which is therefore a continuous function of $x$. We are not aware of any previous  result  in the literature concerning the failure of any form of revival to hold for a linear dispersive PDEs with coupling boundary conditions.

\smallskip
We devote the final part of the paper to the linear Schr{\"o}dinger equation \eqref{linS} with the Robin type boundary conditions \eqref{rbc}. In this case the boundary conditions do not couple the ends of the interval, in contrast with all other situations considered here. We show that only a weaker form of revival holds, leading us to reconsider what constitutes revival for a linear dispersive evolution equation. Specifically, we show that, while the solution is not given by finitely many translates of the initial condition, the presence of a periodic term in the solution representation guarantees that the dichotomy between the persistence versus regularisation of discontinuities at rational  versus generic time still holds. Our main statement can be formulated as follows.
\begin{theorem} 
\label{Robin Revival Corollary}
Consider the linear Schr\"{o}dinger equation \eqref{linS} with initial condition $u_{0} \in L^{2}(0,\pi)$ and Robin boundary conditions \eqref{rbc} with $b\not= 0, 1$. Let $p$ and $q$ be co-prime and $\mathcal{R}_2(p,q)$ be the second order revival operator defined below by \eqref{Revival Operator}. Let
\begin{equation*}
    f_{1}(x) = \sqrt{\frac{\pi}{2}}\frac{b}{(1-b)(e^{\frac{2\pi b}{1-b} - 1})} e^{\frac{b}{1-b}x}, \quad x\in (0,2\pi).
\end{equation*}
Then, the solution at rational time $t_{\mathrm{r}}=2\pi\frac{p}{q}$ is given by
	\begin{equation}
	\begin{aligned}
		\label{Robin Revival}
			u(x, t_{\mathrm{r}}) = &2\sqrt{\frac{2}{\pi}}
\langle u_{0},e^{\frac{b}{1-b}(\cdot)}\rangle_{L^{2}(0,\pi)} e^{i\frac{b^{2}}{(1-b)^{2}}t_{\mathrm{r}}} f_{1}(x) +
\mathcal{R}_2(p,q)\left[u_0^+(x)\right] \\ &+ \mathcal{R}_2(p,q)\left[2f_1*(u_0^--u_0^+))(x)\right], \quad x\in(0,\pi).
\end{aligned}
\end{equation}
where $u_0^{\pm }(x)$ are the even/odd extension in $(0,2\pi)$ of the initial condition, and $*$ denotes the $2\pi$-periodic convolution.
\end{theorem}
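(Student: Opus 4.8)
The plan is to diagonalise the spatial operator under the uncoupled Robin conditions and recognise the time-evolved eigenfunction expansion as the periodic Schr\"odinger evolution of a single modified initial datum. First I would solve $-\phi''=\lambda\phi$ on $(0,\pi)$ subject to $\phi'(0)=\mu\phi(0)$ and $\phi'(\pi)=\mu\phi(\pi)$, where $\mu=\frac{b}{1-b}$. Writing $\lambda=\omega^2$ and imposing both conditions reduces the solvability determinant to $(\omega^2+\mu^2)\sin(\omega\pi)=0$. This produces exactly one negative eigenvalue $\lambda=-\mu^2$, with eigenfunction $e^{\mu x}$, together with the family $\lambda_m=m^2$, $m\ge 1$, with eigenfunctions $c_m(x)=\cos(mx)+\frac{\mu}{m}\sin(mx)$. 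Since $b$ is real the operator is self-adjoint, so these eigenfunctions are mutually orthogonal and complete in $L^2(0,\pi)$, and the solution has the expansion
\begin{equation*}
u(x,t)=e^{i\mu^2 t}\,\frac{\langle u_0,e^{\mu\,\cdot}\rangle}{\|e^{\mu\,\cdot}\|^2}\,e^{\mu x}+\sum_{m\ge 1}e^{-im^2 t}\,\frac{\langle u_0,c_m\rangle}{\|c_m\|^2}\,c_m(x).
\end{equation*}

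Next I would isolate the first term. Using $\|e^{\mu\,\cdot}\|_{L^2(0,\pi)}^2=\frac{e^{2\mu\pi}-1}{2\mu}$ and $\mu^2=\frac{b^2}{(1-b)^2}$, one sees that this term coincides exactly with the first summand of \eqref{Robin Revival}; the constant in $f_1$ is chosen precisely so that $2\sqrt{2/\pi}\,f_1(x)=e^{\mu x}/\|e^{\mu\,\cdot}\|^2$. It then remains to identify the regular part $P(x,t):=\sum_{m\ge 1}e^{-im^2 t}\frac{\langle u_0,c_m\rangle}{\|c_m\|^2}c_m(x)$ with the two revival terms.

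For this I would pass to the half-range Fourier coefficients $a_m,b_m$ of $u_0$ on $(0,\pi)$, which are the cosine and sine coefficients of the even and odd extensions $u_0^{\pm}$. Using $\|c_m\|^2=\frac{\pi}{2}(1+\mu^2/m^2)$ and $\langle u_0,c_m\rangle=\frac{\pi}{2}(a_m+\frac{\mu}{m}b_m)$, I would re-expand each $c_m$ in the exponentials $e^{\pm imx}$ and read off the Fourier coefficients of $P$, viewed as a $2\pi$-periodic function. The decisive algebraic step is the factorisation $m^2+\mu^2=(m+i\mu)(m-i\mu)$, which collapses the coefficient of $e^{imx}$ to $\gamma_m=\frac{ma_m+\mu b_m}{2(m+i\mu)}$, with the analogous expression for the $-m$ mode. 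One then checks that these are exactly the Fourier coefficients of $g:=u_0^++2\,f_1*(u_0^--u_0^+)$: the Fourier coefficients of $f_1$ are proportional to $\frac{\mu}{\mu-im}$, and since $m+i\mu=i(\mu-im)$ the pole of $f_1$ cancels the factor $(m+i\mu)^{-1}$ and the weights match termwise, the overall constant being pinned down by the vanishing of the $m=0$ mode (as it must, since $P$ carries no constant term). Hence $P(\cdot,t)$ is the $2\pi$-periodic Schr\"odinger evolution of $g$.

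Finally, since $g\in L^2$ of the circle, I would invoke the periodic revival property (Theorem~\ref{Linear Evolution Problems}) at $t_{\mathrm r}=2\pi p/q$ to obtain $P(x,t_{\mathrm r})=\mathcal{R}_2(p,q)[g]$, and split this by linearity of $\mathcal{R}_2(p,q)$ into $\mathcal{R}_2(p,q)[u_0^+]+\mathcal{R}_2(p,q)[2f_1*(u_0^--u_0^+)]$; together with the first term this is precisely \eqref{Robin Revival}. I expect the principal obstacle to be the identification in the third paragraph: showing that the spectral weights $\frac{1}{m^2+\mu^2}$ generated by the non-periodic Robin eigenfunctions are reproduced exactly by periodic convolution with the exceptional eigenfunction $f_1$. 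This partial-fraction/Fourier-multiplier matching is what converts a genuinely non-periodic eigenfunction expansion into a periodic one; care is also needed to justify the exponential rearrangements in $L^2$ and to track the normalising constants throughout.
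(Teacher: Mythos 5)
Your proposal is correct and follows essentially the same route as the paper: a spectral expansion in the Robin eigenfunctions, separation of the rank-one contribution of the negative eigenvalue $-m_b^2$, identification of the remaining part as the periodic Schr\"odinger evolution of data built from $u_0^{\pm}$ and convolution with $f_1$ (this is the content of Proposition~\ref{Robin Connection Proposition}, whose proof the paper omits), followed by an application of the periodic revival formula \eqref{R2ls}. The only cosmetic difference is that you collapse the regular part into a single periodic datum $u_0^{+}+2f_1*(u_0^{-}-u_0^{+})$ via the partial-fraction/Fourier-multiplier computation, whereas the paper first splits it into five periodic problems involving $f_1$ and its reflection $f_1^{\natural}$ and recombines them afterwards.
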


We conjecture that this weaker form of revival is generic in the case of boundary conditions that do not couple the interval endpoints. Our observations in this case complement those reported in \cite{boulton2020new} illustrating a new kind of revival phenomenon.

\subsection*{Periodic revival}

The original terminology seems to have originated from the experimentally observed phenomenon of \emph{quantum revival}~\cite{berry2001quantum, vrakking, yeazell}. This describes how an electron that is initially concentrated near a single location of its orbital shell is found concentrated again,  at certain specific times, near a finite number of orbital locations.  This led pure mathematicians to pose the question in terms of whether a quantum particle {\em knows} the time, \cite{kapitanski1999does}. 

A precursor of  the phenomenon was observed as far back as 1834 in optical experiments performed by Talbot \cite{talbot1836lxxvi}.  This motivated the pioneering work of Berry and collaborators \cite{berry1996quantum, berry1996integer, berry2001quantum}, on what they called the {\em Talbot effect} in the context of the linear free space Schr\"odinger equation. The concept was extended later to a class of linear dispersive equations that included the linearised Korteweg--deVries equation, first by Oskolkov, \cite{oskolkov1992class} and subsequently rediscovered  by Olver \cite{olver2010dispersive}, who called the effect \emph{dispersive quantisation}. It was later extended by Erdo\u{g}an and Tzirakis (see the monograph \cite{erdougan2016dispersive} and references therein). An exhaustive introduction to the history and context of this phenomena can be found in the recent survey \cite{smith2020revival}.

Questions have also been addressed on the fractal dimension of the solution profile at irrational times, hence almost everywhere in time, some of them resolved by Rodnianski in \cite{rodnianski2000fractal}. In a different direction, Olver and Chen in \cite{chen2013dispersion} and \cite{chen2014numerical} observed and confirmed numerically the revival and fractalisation effect in some non-linear, integrable and non-integrable, evolution problems. A number of their observations have been rigorously confirmed in \cite{erdogan2013talbot}, \cite{chousionis2014fractal}, \cite{erdougan2019fractal} by Erdo\u{g}an, Tzirakis, Chousionis and Shakan, and more recently in \cite{boulton2020new} for linear integro-differential dispersive equations.

The direct link between our present findings and this periodic framework can be put into perspective by following \cite[\S2.3]{erdougan2016dispersive}, as we briefly summarise. 

Consider general linear dispersive equations of the form
\begin{equation}
	\label{Linear Evolution Problems}
	u_{t}(x,t) +i P(-i\partial_{x})u(x,t)=0,\quad x\in [0,2\pi],\ t>0.
\end{equation}
where $P(\cdot)$ is a polynomial of degree $n$ with integer coefficients. Consider purely periodic boundary conditions, \emph{i.e.} \eqref{qpbc} with $\beta=1$. For initial datum $u(x,0)=u_0(x)\in\,L^2(0,2\pi) = L^{2}$, the solution   is given in terms of the eigenfunction expansion
\begin{equation}
	\label{Linear Evolution Problems solution}
	u(x,t) =\sum_{m\in\mathbb Z}\widehat{u_{0}}(m)e^{-iP(m) t}e_m(x),\qquad  \widehat{u_{0}}(m)=\langle u_0,e_m\rangle,
\end{equation}
where 
\begin{equation}
 	\label{Periodic Eigenpairs 2}
 	e_{m}(x) = \frac{e^{imx}}{\sqrt{2\pi}},\qquad  \langle f,g\rangle=\int_0^{2\pi} f(x)\overline{g(x)} dx,\;\;f,g\in\,L^2.
 \end{equation}
The family  $\{e_m\}_{m\in\mathbb Z}$ is  the orthonormal family of eigenfunctions of the self-adjoint periodic operator $P(-i\partial_{x})$. Note that the latter has a compact resolvent. If $u_0$ is continuous and periodic, the expression \eqref{Linear Evolution Problems solution} is also a continuous periodic function of $x$ and $t$. 
The case of equations \eqref{linS} and \eqref{airy}, corresponding to $P(k)=k^n$ with $n=2,3$, are among the simplest linear evolution equations, 
but they are important as they also appear as the linear part of important nonlinear PDEs of mathematical physics, namely the nonlinear Schr\"odinger and KdV equations respectively.

We focus now on the countable set of rational times, defined as follows.
\begin{definition}
We say that $t>0$ is a \emph{rational time} for the evolution problem \eqref{Linear Evolution Problems} if there exist co-prime, positive integers $p,q\in \mathbb N$ such that 
\begin{equation}\label{Rational Time}
t=2\pi \frac p q.
\end{equation}
\end{definition}

A self-contained proof of the following general result can be found in \cite[Theorem~2.14]{erdougan2016dispersive}.
This result says that, at these rational times, the solution of the periodic problem for \eqref{Linear Evolution Problems} has an explicit representation in terms of translates of $u_0$.

\begin{theorem}[Periodic Revival]
	\label{ET Theorem}
Consider equation \eqref{Linear Evolution Problems}, with initial condition  $u(x,0)=u_0(x)\in L^{2}$ and purely periodic boundary conditions, \eqref{ppbc} with all $\beta_{k}=1$. 
At rational time $t$ given by \eqref{Rational Time}
the solution $u(x,t)$ is given by
\begin{equation}
	\label{ET Formula}
	u\left(x,2\pi\frac{p}{q}\right) = \frac{1}{q}\sum_{k=0}^{q-1} G_{p,q}(k) u_{0}^{\ast} \left(x - 2\pi \frac{k}{q}\right),
\end{equation}
where $u_{0}^{*}$ is the $2\pi$-periodic extension of $u_{0}$, see \eqref{Periodic Extension} below. The coefficients $G_{p,q}(k)$ are given by  
\begin{equation}
	\label{ET Gauss Sum}
	G_{p,q} (k) = \sum_{m=0}^{q-1}e^{-2\pi iP(m)\frac{p}{q}} e^{2\pi i m\frac{k}{q}}.
\end{equation}
\end{theorem}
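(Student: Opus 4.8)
The plan is to establish \eqref{ET Formula} by comparing Fourier coefficients on the two sides. Since the right-hand side is a \emph{finite} superposition of translates of the $L^2$ function $u_0^*$, and the left-hand side is given by the ($L^2$-convergent) expansion \eqref{Linear Evolution Problems solution}, it suffices to check that the two sides have the same $m$-th Fourier coefficient for every $m\in\mathbb{Z}$.

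First I would evaluate \eqref{Linear Evolution Problems solution} at the rational time $t=2\pi p/q$ of \eqref{Rational Time}, so that the $m$-th Fourier coefficient of $u(\cdot,t)$ is $\widehat{u_0}(m)\,e^{-2\pi i P(m)p/q}$. The structural observation on which everything hinges is that, because $P$ has integer coefficients, $m\equiv m'\pmod q$ forces $P(m)\equiv P(m')\pmod q$, and hence $e^{-2\pi i P(m)p/q}$ is a $q$-periodic function of the integer index $m$. This is the one genuinely arithmetic input of the argument and, I expect, the main point to get right: it is precisely what fails for non-integer coefficients, and it is what confines the relevant phases to the finite residue set $\{0,\dots,q-1\}$, thereby producing a \emph{finite} (rather than merely countable) superposition.

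Next I would compute the Fourier coefficients of the right-hand side. Using \eqref{Periodic Eigenpairs 2}, the translate $u_0^*(\cdot-2\pi k/q)$ has $m$-th Fourier coefficient $\widehat{u_0}(m)\,e^{-2\pi i m k/q}$, so the $m$-th coefficient of the finite sum in \eqref{ET Formula} equals $\widehat{u_0}(m)\cdot\tfrac1q\sum_{k=0}^{q-1}G_{p,q}(k)\,e^{-2\pi i m k/q}$. Substituting the definition \eqref{ET Gauss Sum} of $G_{p,q}(k)$ (with summation variable renamed $m'$) and interchanging the two \emph{finite} sums over $k$ and $m'$, the inner sum $\sum_{k=0}^{q-1}e^{2\pi i (m'-m)k/q}$ collapses by discrete orthogonality to $q$ when $m'\equiv m\pmod q$ and to $0$ otherwise.

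Finally, the surviving term is $e^{-2\pi i P(m\bmod q)p/q}$, which by the $q$-periodicity established above equals $e^{-2\pi i P(m)p/q}$; this matches the Fourier coefficient of the left-hand side for every $m$, completing the proof. No convergence difficulty arises beyond the $L^2$-convergence of \eqref{Linear Evolution Problems solution}, since every interchange involves only the finite sum over $k$. Equivalently, one may read the displayed identity as the statement that, on each Fourier mode, the multiplier $e^{-2\pi i P(m)p/q}$ coincides with the symbol $\tfrac1q\sum_{k=0}^{q-1}G_{p,q}(k)\,e^{-2\pi i m k/q}$ of the finite combination of translations appearing in \eqref{ET Formula}.
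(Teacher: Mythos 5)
Your proof is correct: comparing Fourier coefficients, using the discrete orthogonality of $\sum_{k=0}^{q-1}e^{2\pi i(m'-m)k/q}$ and the key arithmetic fact that integer coefficients of $P$ make $e^{-2\pi iP(m)p/q}$ a $q$-periodic function of $m$, is exactly the standard argument. The paper defers the proof of Theorem~\ref{ET Theorem} to the cited reference, but the identical computation appears verbatim in its proof of Lemma~\ref{Revival Operator Lemma} (the case $P(k)=k^{\ell}$ phrased via revival operators), so your approach is essentially the same as the paper's.
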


Note that the functions $G_{p,q} (k)$ in \eqref{ET Gauss Sum} are periodic number-theoretic functions $(\operatorname{mod}q)$, \emph{c.f.} \cite[\S27.10]{nist2010} of Gauss type,  but they are not Gauss sums, as the coefficients $e^{-2\pi iP(m)\frac{p}{q}}$ are not Dirichlet characters. 

The representation given in Theorem \ref{ET Theorem} describes explicitly the ``revival''  of the initial condition at rational times, as translated copies of it which are the building blocks of the solution representation.  
This is in contrast with the behaviour at generic, irrational times. For such times, the solution is continuous and indeed can be shown to have fractal behaviour as soon as the initial condition is sufficiently rough. To be more precise, the following and Theorem~\ref{ET Theorem} complement one another for the case of \eqref{linS}, see \cite{rodnianski2000fractal}.

\begin{theorem}\label{Fractalisation LS}
Let $P(k)=k^2$ and
assume that the hypotheses of Theorem~\ref{ET Theorem} hold true. Assume, additionally, that $u_0(x)$ is of bounded variation. Then, the solution for any value of $t$ that is not of the form \eqref{Rational Time} is a continuous function of $x$. Moreover, if
\[
u_0\notin \bigcup_{s>1/2} H^s(0,2\pi),
\]
where $H^s$ denotes the standard Sobolev space of order $s$,  then for almost every $t$, the solution is nowhere differentiable, and the graph of the real part of the solution has fractal dimension $3/2$.
\end{theorem}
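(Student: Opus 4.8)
The plan is to derive all three assertions from the spectral representation \eqref{Linear Evolution Problems solution}, which for $P(k)=k^2$ reads $u(x,t)=\frac{1}{\sqrt{2\pi}}\sum_{m\in\mathbb Z}\widehat{u_0}(m)\,e^{imx-im^2t}$, and to separate what is forced by Sobolev regularity from what is a genuinely arithmetic effect of the phases $e^{-im^2t}$. First I would record two structural facts. Since $u_0\in BV$, integration by parts gives $\widehat{u_0}(m)=O(1/|m|)$, so $u_0\in H^s$ for every $s<1/2$; and since $e^{it\partial_x^2}$ is unitary on each $H^s$, the solution inherits exactly this $H^{1/2-}$ regularity and no more. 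The hypothesis $u_0\notin\bigcup_{s>1/2}H^s$ pins $u_0$ at the critical regularity — a genuine jump gives $\widehat{u_0}(m)\sim c/m$, so $u_0\notin H^{1/2}$. Consequently neither the continuity nor the fractality can come from an embedding theorem: both express the cancellation (or absence of cancellation) in the quadratic exponential sums, and the whole rational/irrational dichotomy lives there.

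For the continuity statement I would show that, for $t/(2\pi)\notin\mathbb Q$, the symmetric partial sums $S_N(x,t)=\frac{1}{\sqrt{2\pi}}\sum_{|m|\le N}\widehat{u_0}(m)e^{imx-im^2t}$ converge \emph{uniformly} in $x$, whence the limit is continuous. Writing $\widehat{u_0}(m)=d_m/(im)$ with $(d_m)$ bounded (the Fourier coefficients of the finite measure $du_0$), a dyadic decomposition in $m$ reduces each block to a quadratic Weyl (theta) sum $\sum_m e^{imx-im^2t}$ paired against a slowly varying factor. The decisive input is that for irrational $t/(2\pi)$ these theta sums exhibit cancellation uniformly in the linear phase $x$ — quantified through the approximate functional equation / van der Corput bounds — which together with the $O(1/|m|)$ decay forces the tails to vanish uniformly. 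This is the Oskolkov–Rodnianski continuity mechanism \cite{oskolkov1992class,rodnianski2000fractal}; the technical content is precisely the $x$-uniform theta-sum estimate valid for \emph{every} irrational $t/(2\pi)$. I would contrast this with Theorem~\ref{ET Theorem}: at rational $t$ the same sums are Gauss-type sums that do not decay, which is why the discontinuities of $u_0$ survive.

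For nowhere differentiability and the value $3/2$ of the dimension I would work through the increments of $u(\cdot,t)$. The upper bound $\dim\le 3/2$ follows from a Hölder-$1/2$-type modulus of continuity, $|u(x+h,t)-u(x,t)|\lesssim \sqrt{h}$ up to logarithms for a.e.\ $t$: a graph carrying such a modulus has upper box dimension at most $3/2$, and the same local oscillation control excludes differentiability, since a derivative at a point would force increments $o(h)$. The lower bound $\dim\ge 3/2$ is the substantive half. I would first establish the soft estimate $\lVert u(\cdot+h,t)-u(\cdot,t)\rVert_{L^2}\gtrsim\sqrt{h}$ by Plancherel, using $|\widehat{u_0}(m)|\sim c/|m|$ on a density-one set of frequencies — this is exactly where $u_0\notin H^{1/2}$ enters — and the unimodularity of $e^{-im^2t}$. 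I would then upgrade this $L^2$ (averaged) roughness to a geometric lower bound on the number of $\delta$-boxes meeting the graph, via a Frostman / mass-distribution argument converting the increment bound into a lower bound on how often the oscillation over intervals of length $\delta$ exceeds $\sqrt{\delta}$.

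The main obstacle is exactly this last upgrade, and in particular carrying it out \emph{for almost every} $t$ rather than for one prescribed good time. The $L^2$ increment bound is easy, but passing from an averaged statement to a box-counting lower bound requires anti-concentration: one must rule out that the oscillation is large only on a negligible proportion of intervals. This is where the arithmetic of $\sum_m e^{-im^2t}e^{imx}$ returns through \emph{lower} bounds on Gauss-type sums, combined with a second-moment computation integrated over $t\in(0,2\pi)$ and Fubini to produce the a.e.\ statement — precisely the analysis of Rodnianski \cite{rodnianski2000fractal}, organised also in \cite[\S2.3]{erdougan2016dispersive} and foreshadowed in \cite{berry1996integer}. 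I would therefore isolate the required exponential-sum lower bound as a lemma, deduce the a.e.-in-$t$ roughness by the second-moment argument, and finally assemble the two matching dimension bounds to conclude $\dim=3/2$.
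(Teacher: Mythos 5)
Your proposal should first be measured against what the paper actually does: the paper does not prove Theorem~\ref{Fractalisation LS} at all. It is quoted from the literature --- the continuity statement is Oskolkov's theorem \cite{oskolkov1992class}, and the dimension/nondifferentiability statement is Rodnianski's \cite{rodnianski2000fractal}, organised in \cite[\S2.3]{erdougan2016dispersive} --- so the comparison is with those arguments. Your continuity part is a faithful reconstruction of that mechanism: writing $\widehat{u_0}(m)=d_m/(im)$ with $(d_m)$ the bounded Fourier coefficients of the measure $du_0$, summing by parts, and using Weyl's $o(N)$ bound for $\sup_x\bigl|\sum_{m\le N}e^{imx-im^2t}\bigr|$, valid for every irrational $t/2\pi$. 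That half is sound.

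The fractal/nondifferentiability half has genuine gaps. (i) An upper modulus $|u(x+h,t)-u(x,t)|\lesssim\sqrt{h}$ does \emph{not} ``exclude differentiability, since a derivative would force increments $o(h)$'': smooth functions satisfy the same upper bound, so nothing is excluded. Nowhere differentiability requires pointwise \emph{lower} bounds on the oscillation at every $x$, which is a separate argument, not a byproduct of the upper bound. (ii) The hypothesis $u_0\notin\bigcup_{s>1/2}H^s$ only says that $\sum_m |m|^{2s}|\widehat{u_0}(m)|^2$ diverges for every $s>1/2$; it does not give $|\widehat{u_0}(m)|\sim c/|m|$ on a density-one set of frequencies (the mass could sit on a sparse set), nor the uniform-in-$h$ bound $\|u(\cdot+h,t)-u(\cdot,t)\|_{L^2}\gtrsim\sqrt{h}$ --- one only gets $\limsup_{h\to 0}h^{-s}\|u(\cdot+h,t)-u(\cdot,t)\|_{L^2}=\infty$ for each $s>1/2$, i.e.\ largeness along some sequence of $h$'s. (iii) Most importantly, you place the arithmetic input (Gauss-sum lower bounds, second moments in $t$, Fubini) inside the lower bound for the dimension, but in the actual proof the arithmetic enters only the \emph{upper} bound: exponential-sum estimates valid for a.e.\ $t$ give $u(\cdot,t)\in B^{1/2}_{\infty,\infty}$, hence $\dim\le 3/2$ and the H\"older-$1/2$ modulus. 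The lower bound is soft: unitarity of the flow on every $H^s$ shows $u(\cdot,t)\notin\bigcup_{s>1/2}H^s$ for \emph{every} $t$; then the elementary interpolation $\|f(\cdot+h)-f\|_{L^2}^2\le\|f(\cdot+h)-f\|_{L^1}\,\|f(\cdot+h)-f\|_{L^\infty}$, combined with the $B^{1/2}_{\infty,\infty}$ bound, shows $u(\cdot,t)\notin B^{s}_{1,\infty}$ for all $s>1/2$, and the Deliu--Jawerth lemma relating graph dimension to $B^{s}_{1,\infty}$ membership yields $\dim\ge 3/2$. Your ``main obstacle'' --- upgrading averaged roughness to box counting --- is thus resolved by a two-line inequality plus a known Besov-dimension lemma, not by the exponential-sum lower bounds and Fubini-in-$t$ machinery you sketch; as written, your route to the lower bound and to nowhere differentiability does not close.
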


A similar result holds in general for equation \eqref{Linear Evolution Problems},  see \cite{erdougan2016dispersive}.

\section{Revival operators}

This section is devoted to the notion of \emph{revival operators}, that can be regarded as the basic building blocks of the revival formula \eqref{ET Formula} for solutions of linear dispersive PDEs whose polynomial dispersion is of the form $P(k)=k^l$, $l\in\mathbb N$. They provide a compact notation, e.g. for the statement of Theorem~\ref{ET Theorem}. More significantly, while it is straightforward  to compute the corresponding Fourier representation, they give the crucial link for extending the revival results to more general pseudo-periodic problems from known cases, such as the linear Schr\"odinger equation, to higher order case, in particular the Airy equation.

Here and everywhere below, we will denote by $f^{\ast}$ the $2\pi$-periodic extension to $\mathbb R$ of a function $f$ defined on $[0,2\pi]$. Explicitly, 
\begin{equation}
	\label{Periodic Extension}
	f^{\ast} (x) = f(x - 2\pi m), \qquad 2\pi m\leq x < 2\pi (m+1), \quad m\in\mathbb{Z}.
\end{equation}

Because of the role of specific translation operators in what follows, we set our notation with the following definition. 
\begin{definition}[Periodic translation operator]
Let $s\in\mathbb{R}$.
The  \emph{periodic translation operator} $\mathcal{T}_{s}:L^{2} \rightarrow L^{2}$ is  given by 
\begin{equation}
	\label{Periodic Translation Operator}
	\mathcal{T}_{s}f(x) = f^{\ast} (x-s), \qquad  x\in[0,2\pi).
\end{equation}
\end{definition}

Note that $\mathcal{T}_s$ are isometries. In our scaling, the Fourier coefficients of $\mathcal{T}_{s}f$, $f\in L^2$,  turn out to be 
\begin{equation}
	\label{Periodic Translation Operator FC}
	\widehat{\mathcal{T}_{s}f}(m) = 
	 \int_{0}^{2\pi} \mathcal{T}_{s}f(x) \overline{e_m(x)}dx = e^{-ims} \widehat{f}(m).
\end{equation}
Revival operators are formed as finite linear combinations of specific translation operators. 

\begin{definition}[Periodic revival operator]
	\label{Periodic Revival Operator}
	Let $p$ and $q$ be integers and co-prime. Let $\ell \in \mathbb{N}$. The \emph{periodic revival operator} $\mathcal{R}_{\ell}(p,q): L^{2}\rightarrow L^2$ of order $\ell$ at $(p,q)$ is given by 
	\begin{equation}
		\label{Revival Operator}
		\mathcal{R}_{\ell}(p,q) f = \frac{\sqrt{2\pi}}{q}\sum_{k=0}^{q-1} G^{(l)}_{p,q}(k)\mathcal{T}_{\frac{2\pi k}{q}}f, \quad G^{(l)}_{p,q}(k)=\sum_{m=0}^{q-1} e^{-im^{\ell}\frac{2\pi p}{q}} e_{m}\left(\frac{2\pi k}{q}\right),
	\end{equation}
where $e_{m}(x)$ are the normalised eigenfunctions of the $2\pi$-periodic problem given in \eqref{Periodic Eigenpairs 2}.
\end{definition}

As we shall see next, from the Fourier representation, it follows that all periodic revival operators are isometries.  

\begin{lemma}
	\label{Revival Operator Lemma}
	Let $p$ and $q$ be integers and co-prime. Let $\ell \in \mathbb{N}$. Then, $\mathcal{R}_{\ell}(p,q)$ given by \eqref{Revival Operator} is an isometry of $L^2$. Moreover, for all $f \in L^2$ we have 
	\begin{equation}
		\label{Revival Operator Lemma 1}
		\langle \mathcal{R}_{\ell}(p,q)f,e_{j}\rangle = e^{-ij^{\ell}\frac{2\pi p}{q}} \widehat{f}(j).
	\end{equation}
\end{lemma}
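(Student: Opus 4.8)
The plan is to compute the Fourier coefficients of $\mathcal{R}_{\ell}(p,q)f$ directly, establish the key identity \eqref{Revival Operator Lemma 1}, and then deduce the isometry property from it via Parseval's identity. The computation of the Fourier coefficients is essentially forced by the definition \eqref{Revival Operator} together with the translation formula \eqref{Periodic Translation Operator FC}, so the core of the argument is a finite-sum manipulation, and I expect no deep obstacle.

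First I would take the inner product of \eqref{Revival Operator} with $e_{j}$ and use linearity together with \eqref{Periodic Translation Operator FC} applied to $\mathcal{T}_{2\pi k/q}f$, giving $\widehat{\mathcal{T}_{2\pi k/q}f}(j) = e^{-ij\,2\pi k/q}\widehat{f}(j)$. Substituting the explicit coefficients $G^{(\ell)}_{p,q}(k)$ from \eqref{Revival Operator}, this produces a double sum over $k$ and $m$ of the form
\begin{equation*}
\langle \mathcal{R}_{\ell}(p,q)f,e_{j}\rangle = \frac{\sqrt{2\pi}}{q}\sum_{k=0}^{q-1}\sum_{m=0}^{q-1} e^{-im^{\ell}\frac{2\pi p}{q}}\, e_{m}\!\left(\tfrac{2\pi k}{q}\right) e^{-ij\frac{2\pi k}{q}}\,\widehat{f}(j).
\end{equation*}
Writing $e_{m}(2\pi k/q) = e^{im\,2\pi k/q}/\sqrt{2\pi}$ cancels the $\sqrt{2\pi}$ prefactor and groups the $k$-dependence into $e^{i(m-j)2\pi k/q}$.

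The crux is then the inner sum over $k$. By the standard discrete orthogonality relation, $\frac{1}{q}\sum_{k=0}^{q-1} e^{i(m-j)2\pi k/q}$ equals $1$ when $m\equiv j \pmod q$ and vanishes otherwise. Here the index $m$ ranges only over $\{0,\dots,q-1\}$, while $j$ ranges over all of $\mathbb{Z}$, so I would isolate the unique residue $m\in\{0,\dots,q-1\}$ with $m\equiv j\pmod q$. For the factor $e^{-im^{\ell}2\pi p/q}$ I must check that $m^{\ell}\,p/q$ depends only on the residue of $m$ modulo $q$; since $m\equiv j\pmod q$ implies $m^{\ell}\equiv j^{\ell}\pmod q$, we have $e^{-im^{\ell}2\pi p/q} = e^{-ij^{\ell}2\pi p/q}$, and this is the one subtle point worth stating explicitly. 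Collecting everything yields exactly \eqref{Revival Operator Lemma 1}.

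Finally, the isometry claim follows immediately: from \eqref{Revival Operator Lemma 1} each Fourier coefficient is multiplied by the unimodular factor $e^{-ij^{\ell}2\pi p/q}$, so $|\langle \mathcal{R}_{\ell}(p,q)f,e_{j}\rangle| = |\widehat{f}(j)|$ for every $j\in\mathbb{Z}$. Since $\{e_{j}\}_{j\in\mathbb{Z}}$ is a complete orthonormal basis of $L^{2}$, Parseval's identity gives $\|\mathcal{R}_{\ell}(p,q)f\|^{2} = \sum_{j}|\widehat{f}(j)|^{2} = \|f\|^{2}$, which establishes that $\mathcal{R}_{\ell}(p,q)$ is an isometry. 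The main thing to be careful about is the modular-arithmetic step identifying the surviving term in the $k$-sum and confirming that both phase factors respect the residue class, rather than any genuine analytic difficulty.
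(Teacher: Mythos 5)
Your proposal is correct and follows essentially the same route as the paper's proof: expand via the translation formula \eqref{Periodic Translation Operator FC}, exchange the order of the finite sums, apply discrete orthogonality in $k$, and use that $m\equiv j \pmod q$ forces $m^{\ell}\equiv j^{\ell}\pmod q$ so the phase factor collapses to $e^{-ij^{\ell}2\pi p/q}$. You are slightly more explicit than the paper in deducing the isometry from \eqref{Revival Operator Lemma 1} via Parseval, but the argument is the same.
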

\begin{proof}
	In order to deduce that $\mathcal{R}_{\ell}(p,q)$ is an isometry on $L^2$, it is enough to prove \eqref{Revival Operator Lemma 1}. From the left hand side of \eqref{Revival Operator} and from \eqref{Periodic Translation Operator FC}, it follows that
	\begin{equation*}
		\begin{aligned}
			\langle \mathcal{R}_{\ell}(p,q)f,e_{j}\rangle 
			= \frac{\sqrt{2\pi}}{q} \sum_{k=0}^{q-1} G^{(l)}_{p,q}(k) \langle \mathcal{T}_{\frac{2\pi k}{q}}f,e_{j}\rangle = \widehat f(j)\frac{\sqrt{2\pi}}{q}  \sum_{k=0}^{q-1}e^{-ij\frac{2\pi k}{q}}G^{(l)}_{p,q}(k).\end{aligned}
	\end{equation*} 
	Subtitute the right hand side of \eqref{Revival Operator} to get,
	\begin{equation*}
		\langle \mathcal{R}_{\ell}(p,q)f,e_{j}\rangle = \frac{\widehat{f}(j)}{q} \sum_{k=0}^{q-1}
		\sum_{m=0}^{q-1} e^{-im^{\ell}\frac{2\pi p}{q}} e^{i(m-j)\frac{2\pi k}{q}}=\frac{\widehat{f}(j)}{q} \sum_{m=0}^{q-1}e^{-im^{\ell}\frac{2\pi p}{q}}
		\sum_{k=0}^{q-1}  e^{i(m-j)\frac{2\pi k}{q}}.
	\end{equation*}
	
	Now, if $m\not\equiv j$ $(\operatorname{mod} q)$, then there exists $z\in\mathbb{Z}$ not a multiple of $q$, such that $m-j = z_{1}q+z$ for $z_{1}\in\mathbb{Z}$. Hence,
	\begin{equation*}
		\sum_{k=0}^{q-1} e^{i(m-j)\frac{2\pi k}{q}} = \sum_{k=0}^{q-1} e^{iz_{1}q\frac{2\pi k}{q}} \ e^{iz\frac{2\pi k}{q}} = \sum_{k=0}^{q-1}\left(e^{i2\pi\frac{z}{q}}\right)^{k} = \frac{1 - \left(e^{i2\pi\frac{z}{q}}\right)^{q}}{1-e^{i2\pi\frac{z}{q}}} = 0.
	\end{equation*}
	On the other hand, whenever $m \equiv j$ $(\operatorname{mod}q)$, we have $m-j = z_2q$ for $z_2\in\mathbb{Z}$ and so
	\begin{equation*}
		\sum_{k=0}^{q-1} e^{i(m-j)\frac{2\pi k}{q}} = \sum_{k=0}^{q-1} e^{iz_2q\frac{2\pi k}{q}} = q.
	\end{equation*}
	Moreover, in this case we know that, for any $\ell\in\mathbb{N}$, $m^{\ell} \equiv j^{\ell}$ $(\operatorname{mod} q)$, and so $m^{\ell} = j^{\ell} + z_3q$ for some other $z_3\in\mathbb{Z}$, hence
	\begin{equation*}
		e^{-im^{\ell}\frac{2\pi p }{q}} = e^{-ij^{\ell}\frac{2\pi p }{q}} e^{-i z_3q \frac{2\pi p}{q}} = e^{-ij^{\ell}\frac{2\pi p }{q}}.
	\end{equation*}
	Therefore, as $m$ runs from $0$ to $q-1$, we find that
	\begin{equation*}
		\langle \mathcal{R}_{\ell}(p,q)f,e_{j}\rangle = \widehat{f}(j) e^{-ij^{\ell}\frac{2\pi p}{q}} ,
	\end{equation*}
as claimed.
\end{proof}
The proof of the lemma above relies on elementary arguments and depends on the specific form of the eigenfunctions $e_m(x)$ and their periodicity. This is in fact at the heart of the periodic revival phenomenon. It suggests strongly that  such phenomenon depends crucially on periodicity and will not survive if other boundary conditions are prescribed. The investigation of the validity of this statement is the motivation for this work.   

By immediate substitution, Theorem~\ref{ET Theorem} applied to the linear Schr\"{o}dinger  and  Airy equations can be reformulated in terms of revival operators.

\begin{lemma}
\label{Sch Airy Periodic Revivals Lemma}
Let $u_{0}\in L^2$ and assume periodic boundary conditions, \eqref{qpbc} with $\beta=1$. At rational time $t=2\pi\frac p q$, the solution to the periodic problem for equation \eqref{linS} starting at $u_0$ is given by 
\begin{equation}\label{R2ls}
	u\left(x,2\pi\frac{p}{q}\right) = \mathcal{R}_{2}(p,q)u_{0}(x)
	\end{equation}
	and the solution to the periodic problem for equation \eqref{airy} starting at $u_0$ is given by 
\begin{equation}\label{R2ai}
	u\left(x,2\pi\frac{p}{q}\right) = \mathcal{R}_{3}(p,q)u_{0}(x).
	\end{equation}
\end{lemma}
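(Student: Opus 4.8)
The plan is to derive both formulae by direct substitution into the periodic revival representation of Theorem~\ref{ET Theorem}, after identifying the dispersion polynomial $P$ attached to each equation. First I would check that, cast in the form \eqref{Linear Evolution Problems}, the linear Schr\"odinger equation \eqref{linS} corresponds to $P(k)=k^{2}$ and the Airy equation \eqref{airy} to $P(k)=k^{3}$. This is immediate: since $-i\partial_{x}$ acts on $e_{m}$ as multiplication by $m$, we have $P(-i\partial_{x})e_{m}=P(m)e_{m}$; solving $u_{t}+iP(-i\partial_{x})u=0$ for $P(-i\partial_{x})$ gives $-\partial_{x}^{2}=(-i\partial_{x})^{2}$ for \eqref{linS} and $i\partial_{x}^{3}=(-i\partial_{x})^{3}$ for \eqref{airy}, whence $P(k)=k^{2}$ and $P(k)=k^{3}$, respectively.

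Next, specialising the Gauss-type coefficients \eqref{ET Gauss Sum} to $P(m)=m^{\ell}$ gives $G_{p,q}(k)=\sum_{m=0}^{q-1}e^{-2\pi i m^{\ell}p/q}e^{2\pi i mk/q}$. Comparing with the coefficients $G^{(\ell)}_{p,q}(k)$ in the definition \eqref{Revival Operator}, and using $e_{m}(2\pi k/q)=e^{2\pi i mk/q}/\sqrt{2\pi}$, I would record the single identity $G_{p,q}(k)=\sqrt{2\pi}\,G^{(\ell)}_{p,q}(k)$. Rewriting each translated copy in \eqref{ET Formula} through the translation operator, $u_{0}^{\ast}(x-2\pi k/q)=\mathcal{T}_{2\pi k/q}u_{0}(x)$, and substituting both identifications into \eqref{ET Formula} yields
\[
u\!\left(x,2\pi\tfrac{p}{q}\right)=\frac{1}{q}\sum_{k=0}^{q-1}\sqrt{2\pi}\,G^{(\ell)}_{p,q}(k)\,\mathcal{T}_{\frac{2\pi k}{q}}u_{0}(x)=\frac{\sqrt{2\pi}}{q}\sum_{k=0}^{q-1}G^{(\ell)}_{p,q}(k)\,\mathcal{T}_{\frac{2\pi k}{q}}u_{0}(x)=\mathcal{R}_{\ell}(p,q)u_{0}(x),
\]
which is exactly the right-hand side of \eqref{Revival Operator}. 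Taking $\ell=2$ gives \eqref{R2ls} and $\ell=3$ gives \eqref{R2ai}.

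Because the entire argument is bookkeeping, I do not expect any genuine obstacle. The only points demanding care are tracking the normalisation factor $\sqrt{2\pi}$, which migrates between the $G$-coefficients (through the $1/\sqrt{2\pi}$ carried by $e_{m}$) and the prefactor of $\mathcal{R}_{\ell}(p,q)$, and confirming that the sign conventions in \eqref{Linear Evolution Problems} genuinely reproduce $P(k)=k^{\ell}$ for each of the two equations rather than, say, $-k^{\ell}$, which would flip the phase $e^{-2\pi i m^{\ell}p/q}$ and hence the resulting operator.
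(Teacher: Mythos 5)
Your argument is correct and is exactly what the paper intends: the paper states this lemma ``by immediate substitution'' into Theorem~\ref{ET Theorem} without writing out the details, and your identification of $P(k)=k^{2}$, $P(k)=k^{3}$, the relation $G_{p,q}(k)=\sqrt{2\pi}\,G^{(\ell)}_{p,q}(k)$, and the rewriting of translates via $\mathcal{T}_{2\pi k/q}$ supplies precisely that bookkeeping. No gaps.
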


\section{Pseudo-periodic problems for the linear Schr\"odinger equation}\label{pseudo ls}

In this section we give an alternative proof of the results reported in \cite{olver2018revivals}, by deriving a new representation of the solution of the problem \eqref{linS}-\eqref{ppbc}, namely
\begin{equation}
	\label{Schrodinger Pseudo-periodic Problem}
	\begin{aligned}
		&iu_t+u_{xx}=0, \quad u(x,0) = u_{0}(x)\in\, L^2,\\
		&\beta_{0} u(0,t) = u(2\pi,t), \quad \beta_{1} u_x(0,t) =u_x(2\pi,t),
	\end{aligned}
\end{equation}
where $\beta_{0}, \ \beta_{1}\in\mathbb{C}$ satisfy
\[
   \arccos\left(\frac{1+\beta_0\beta_1}{\beta_0+\beta_1}\right)\in\mathbb{R}.
\]
The latter condition ensures that all the eigenvalues of the underlying (closed) spatial operator are real and that this operator has a family of eigenfunctions which is complete in $L^2$, forming a bi-orthogonal basis. Moreover, this family reduces to an orthonormal basis, i.e. the operator is self-adjoint,  if and only if $\overline{\beta_0}\beta_1=1$.  For details, see \cite{olver2018revivals}.

Our goal is to show that the solution of \eqref{Schrodinger Pseudo-periodic Problem} can be written as the sum of four terms,  each obtained as the solution of a  {\em periodic} problem. These four periodic problems start from an initial condition obtained by a suitable transformation of the given initial  $u_{0}(x)$. 

In order to construct a solution of \eqref{Schrodinger Pseudo-periodic Problem}, we consider the  bi-orthogonal basis $\{\phi_{j},\psi_{\ell}\}_{j,\ell\in\mathbb{Z}}$ formed by the eigenfunctions of the spatial operator and their adjoint pairs. The spectral problem is given by
\begin{equation}
	\label{Non-Self-adjoint eigenvalue problem}
	-\phi''(x) = \lambda \phi(x), \quad \beta_{0}\phi(0)=\phi(2\pi), \ \beta_{1}\phi'(0)=\phi'(2\pi).
\end{equation}  
As shown in \cite{olver2018revivals},  the eigenvalues $\{\lambda_j\}_{j\in \mathbb{Z}}$ are given by 
\begin{equation}
	\label{Non-Self-adjoint Eigenvalues}
	\lambda_{j} = k_{j}^{2}, \quad k_{j} = (j+k_{0}), \quad k_{0} = \frac{1}{2\pi}\arccos(\frac{1+\beta_{0}\ \beta_{1}}{\beta_{0}+\beta_{1}}).
\end{equation}
and the corresponding eigenfunctions are
\begin{equation}
	\label{Non-Self-adjoin eigefunctions}
	\phi_{j}(x) = \frac{1}{\sqrt{2\pi \tau}}(e^{ik_{j}x} + \Lambda_{0}e^{-ik_{j}x}),
	\end{equation}
where
\begin{equation}
	\label{Non-Self-adjoint Various CST}
	\tau = \frac{(\gamma^2 + 1)(\beta_{0}\beta_{1}+1)-2\gamma(\beta_{0}+\beta_{1})}{(\beta_{0} \gamma - 1)(\beta_{1}\gamma - 1)}, \ \Lambda_{0} = \frac{\gamma - \beta_{0}}{\beta_{0} - \gamma^{-1}}= \frac{\gamma - \beta_{1}}{\gamma^{-1} - \beta_{1}},
\end{equation}
and
\begin{equation}
	\label{Non-Self-Adjoint Gamma}
	\gamma = e^{ik_{j}2\pi} = e^{i2\pi k_{0}} =  \frac{1+\beta_{0}\ \beta_{1}}{\beta_{0}+\beta_{1}} + i \sqrt{1 -  \left(\frac{1+\beta_{0}\ \beta_{1}}{\beta_{0}+\beta_{1}}\right)^{2}}.
\end{equation}
We require also the eigenfunctions of the adjoint spectral problem
\begin{equation}
	\label{Dual eigenvalue problem}
	-\psi''(x) = \lambda \psi(x), \quad \psi(0)=\bar{\beta_{1}}\psi(a), \ \psi'(0)=\bar{\beta_{0}}\psi'(a).
\end{equation}  
These are given by
\begin{equation}
	\label{Dual eigefunctions}
	\psi_{j}(x) = \frac{1}{\sqrt{2\pi\tau}}(e^{ik_{j}x} + I_{0}e^{-ik_{j}x}) 
	\end{equation}
where $\tau$ is as in \eqref{Non-Self-adjoint Various CST} and
\begin{equation}
\label{Dual Various CST}
	 I_{0} = \frac{\gamma - 1/\bar{\beta_{1}}}{1/\bar{\beta_{1}} - \gamma^{-1}}.
	 \end{equation}
The family  $\{\phi_{j}\}_{j\in\mathbb{Z}}$ is a complete system of $L^2$. Then, for any fixed time $t\geq 0$ and initial $v_{0}\in L^2$, the solution to \eqref{Schrodinger Pseudo-periodic Problem} is given by the spectral expansion
\begin{equation}
	\label{Non-Self-Adjoint Solution}
	\begin{aligned}
	&u(x,t) = \sum_{j\in\mathbb{Z}} \langle u_{0} , \psi_{j}\rangle e^{-ik_{j}^{2}t} \phi_{j} (x) \\
	& = \frac{1}{2\pi\tau}\sum_{j\in\mathbb{Z}} \Big(\int_{0}^{2\pi}u_{0}(y)e^{-ik_{j}y} dy + \bar{I}_{0}\int_{0}^{2\pi} u_{0}(y)e^{ik_{j}y}dy\Big) e^{-ik_{j}t^{2}} \big(e^{ik_{j}x} + \Lambda_{0} e^{-ik_{j}x}\big).
	\end{aligned}
\end{equation}

Our alternative proof that this problem exhibits the periodic revival phenomenon will rely on the existence of revivals for suitable periodic problems. Given  $u_{0}\in L^2$, we define $v_{0}$, $w_{0}\in \,L^2$ as
	\begin{equation}
		\label{Non-Self-adjoint initial condition}
		v_{0} (x) = u_{0}(x) e^{-i  k_{0}x}, \quad w_{0}(x) = u_{0}(x) e^{i k_{0}x},
	\end{equation}
where $k_0\in\mathbb{R}$ is defined in \eqref{Non-Self-adjoint Eigenvalues}.  For any $f\in L^2$, we will denote by the symbol $f^{\natural}(x)$ the reflection of $f(x)$ with respect to $x=\pi$, namely
\begin{equation}
		\label{Reflected initial condition}
		f^{\natural}(x) = f(2\pi-x).
	\end{equation}

\begin{proposition}
	\label{Non-Self-Adjoint Correspondence Theorem}
The solution $u(x,t)$ of \eqref{Schrodinger Pseudo-periodic Problem}  admits the following representation,
	\begin{equation}
		\label{Non-Self-adjoint Correspondence}
		\begin{aligned}
			u(x,t) = \frac{e^{-i k_{0}^{2}t}}{\tau} \Big\{& e^{ik_{0}x} \mathcal{T}_{2k_0t} v(x,t) + \Lambda_{0} e^{-ik_{0}x} \mathcal{T}_{-2k_0t} v^{\natural}(x,t) \\
			& + \bar{I}_{0} e^{ik_{0}x} \mathcal{T}_{2k_0t} w^{\natural}(x,t) + \Lambda_{0}\bar{I}_{0} e^{-ik_{0}x} \mathcal{T}_{-2k_0t} w(x,t) \Big\},
		\end{aligned}
	\end{equation}
	where $\mathcal{T}_{s}$ is the translation operator defined by \eqref{Periodic Translation Operator},  the constants $\tau$, $\Lambda_{0}$ are given in \eqref{Non-Self-adjoint Various CST} and  $I_{0}$ by \eqref{Dual Various CST}. Here
$v,w,v^{\natural},w^{\natural}$ are
	 the  solutions of the periodic problem, \emph{i.e.} $\beta_0=\beta_1=1$, with initial conditions as follows, 
\begin{itemize}
\item
 $v(x)$ denotes the solution corresponding to  initial condition $v_0(x)$
\item
 $w(x)$ denotes the solution corresponding to  initial condition $w_0(x)$
 \item
 $v^{\natural}(x)$ denotes the solution corresponding to  initial condition $v^{\natural}_0(x)$
\item
 $w^{\natural}(x)$ denotes the solution corresponding to  initial condition $w^{\natural}_0(x)$.
 \end{itemize}
	\end{proposition}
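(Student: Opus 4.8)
The plan is to start from the spectral expansion \eqref{Non-Self-Adjoint Solution} and rewrite each of its four structurally distinct summands as a phase-modulated, translated copy of one of the four periodic solutions $v,w,v^{\natural},w^{\natural}$. The engine driving the whole argument is the factorisation $k_{j}=j+k_{0}$, which forces
\[
e^{-ik_{j}^{2}t}=e^{-ik_{0}^{2}t}\,e^{-2ijk_{0}t}\,e^{-ij^{2}t}.
\]
The first factor is the global phase $e^{-ik_{0}^{2}t}$ appearing out front in \eqref{Non-Self-adjoint Correspondence}, the last factor $e^{-ij^{2}t}$ is exactly the time evolution of the $j$-th Fourier mode of a periodic Schr\"odinger solution, and the middle factor $e^{-2ijk_{0}t}$ is, by \eqref{Periodic Translation Operator FC}, precisely the Fourier multiplier of the periodic translation $\mathcal{T}_{2k_{0}t}$ (or $\mathcal{T}_{-2k_{0}t}$ after reindexing $j\mapsto -j$).

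Next I would interpret the spatial and integral factors. Writing $e^{\pm ik_{j}x}=e^{\pm ik_{0}x}\sqrt{2\pi}\,e_{\pm j}(x)$ splits off the boundary phase $e^{\pm ik_{0}x}$ and leaves the periodic eigenfunctions. Similarly, since $v_{0}(y)=u_{0}(y)e^{-ik_{0}y}$ and $w_{0}(y)=u_{0}(y)e^{ik_{0}y}$, the two integrals in \eqref{Non-Self-Adjoint Solution} become, up to the normalisation $\sqrt{2\pi}$, the Fourier coefficients $\widehat{v_{0}}(j)$ and $\widehat{w_{0}}(-j)$ respectively. Substituting these identifications into the first summand (the $e^{-ik_{j}y}\cdot e^{ik_{j}x}$ product) and collecting terms produces exactly $\frac{e^{-ik_{0}^{2}t}}{\tau}e^{ik_{0}x}\mathcal{T}_{2k_{0}t}v(x,t)$, the first term of \eqref{Non-Self-adjoint Correspondence}; the constant $\tfrac1\tau$ arises from $\tfrac{1}{2\pi\tau}\cdot\sqrt{2\pi}\cdot\sqrt{2\pi}$. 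An identical manipulation of the fourth summand (the $e^{ik_{j}y}\cdot e^{-ik_{j}x}$ product), after the reindexing $j\mapsto -j$, yields the last term $\frac{e^{-ik_{0}^{2}t}}{\tau}\Lambda_{0}\bar{I}_{0}e^{-ik_{0}x}\mathcal{T}_{-2k_{0}t}w(x,t)$.

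The one genuinely non-routine ingredient concerns the two mixed summands, which involve $e_{-j}(x)$ rather than $e_{j}(x)$ and must be matched to the reflected solutions $v^{\natural},w^{\natural}$. Here I would first establish the reflection identity $\widehat{f^{\natural}}(m)=\widehat{f}(-m)$ for $f\in L^{2}$, which follows from the substitution $y=2\pi-x$ in the defining integral together with $e^{-2\pi im}=1$. Consequently the periodic solution is $v^{\natural}(x,t)=\sum_{m}\widehat{v_{0}}(-m)e^{-im^{2}t}e_{m}(x)$, and applying $\mathcal{T}_{-2k_{0}t}$ and reindexing $m\mapsto -j$ reproduces precisely the sum appearing in the second summand of \eqref{Non-Self-Adjoint Solution}; the same computation handles the third summand via $w^{\natural}$ and $\mathcal{T}_{2k_{0}t}$. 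The main obstacle is thus purely bookkeeping: keeping the sign of the translation parameter, the index reflection $j\mapsto -j$, and the action of $(\cdot)^{\natural}$ mutually consistent across the four terms, so that each of the four coefficients $1,\Lambda_{0},\bar{I}_{0},\Lambda_{0}\bar{I}_{0}$ lands on the correct periodic building block. Once the four identifications are in place, summing them gives \eqref{Non-Self-adjoint Correspondence}; convergence in $L^{2}$ is automatic, since every operation involved, namely $\mathcal{T}_{s}$, multiplication by unimodular phases, and reflection, is an isometry.
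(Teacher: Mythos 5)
Your proposal is correct and follows essentially the same route as the paper's proof: expand in the bi-orthogonal eigenbasis, factor $e^{-ik_j^2t}=e^{-ik_0^2t}e^{-2ijk_0t}e^{-ij^2t}$, split $e^{\pm ik_jx}$ into $e^{\pm ik_0x}$ times periodic eigenfunctions, identify the integrals with $\widehat{v_0}(j)$ and $\widehat{w_0}(-j)$, and recognise each of the four resulting sums as a translated periodic solution via the Fourier multiplier of $\mathcal{T}_s$. In fact you supply one detail the paper leaves implicit, namely the reflection identity $\widehat{f^{\natural}}(m)=\widehat{f}(-m)$ needed to match the mixed summands to $v^{\natural}$ and $w^{\natural}$.
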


Before giving a proof, we highlight the important consequence of this proposition. Substituting the expression \eqref{R2ls} for the solution of the periodic problem in \eqref{Non-Self-adjoint Correspondence}, one obtains revival for the pseudo-periodic linear Schr\"odinger equation. 

\begin{corollary}[Pseudo-periodic revival property]
	\label{Non-Self-adjoint Pseudo-periodic Revival}
	The solution of the pseudo-periodic problem \eqref{Schrodinger Pseudo-periodic Problem} at rational times, is given by
	\begin{equation}
		\label{Non-Self-adjoint Revival}
\begin{aligned}
			u\left(x, 2\pi\frac{p}{q}\right) = &\frac{e^{-i\frac{2\pi k_{0}^{2}p}{q}}}{\tau} \Big \{ 
			e^{i k_{0}x} \left[\mathcal{T}_{\frac{4\pi k_{0}p}{q}}\mathcal{R}_{2}(p,q)\right]
			e^{-i  k_{0}x}u_{0}(x) \\
			&+e^{-ik_{0}x}\left[ \Lambda_{0} e^{-i  k_{0}2\pi} \mathcal{T}_{-\frac{4\pi k_{0}p}{q}}  \mathcal{R}_{2}(p,q)\right]
			e^{i  k_{0}x}u_{0}^{\natural}(x) \\
			& +  e^{i k_{0}x}\left[\bar{I}_{0}e^{i  k_{0}2\pi} \mathcal{T}_{\frac{4\pi k_{0}p}{q}} \mathcal{R}_{2}(p,q)\right]e^{-i  k_{0}x}u_{0}^{\natural}(x)
			\\
			&+ e^{-i k_{0}x} \left[ \Lambda_{0}\bar{I}_{0}\mathcal{T}_{-\frac{4\pi k_{0}p}{q}}  \mathcal{R}_{2}(p,q)\right]e^{i  k_{0}x}u_{0}(x)
			\Big\}.
		\end{aligned}
	\end{equation}
\end{corollary}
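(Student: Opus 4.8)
The plan is to derive \eqref{Non-Self-adjoint Revival} by specialising the general representation of Proposition~\ref{Non-Self-Adjoint Correspondence Theorem} to the rational time $t_{\mathrm r}=2\pi\frac pq$ and then inserting the explicit periodic revival formula \eqref{R2ls} of Lemma~\ref{Sch Airy Periodic Revivals Lemma}. First I would record the scalar quantities that become explicit once $t=t_{\mathrm r}$ is fixed: the overall phase $e^{-ik_0^2t_{\mathrm r}}=e^{-i\frac{2\pi k_0^2 p}{q}}$, and the two translation parameters $2k_0t_{\mathrm r}=\frac{4\pi k_0 p}{q}$ and $-2k_0t_{\mathrm r}=-\frac{4\pi k_0 p}{q}$. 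With these substitutions the prefactor $\frac{1}{\tau}e^{-ik_0^2 t_{\mathrm r}}$ and the subscripts of every translation $\mathcal T$ appearing in \eqref{Non-Self-adjoint Correspondence} already coincide with those displayed in \eqref{Non-Self-adjoint Revival}.

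Next I would replace each of the four periodic solutions $v(x,t_{\mathrm r})$, $v^{\natural}(x,t_{\mathrm r})$, $w^{\natural}(x,t_{\mathrm r})$, $w(x,t_{\mathrm r})$ by the right-hand side of \eqref{R2ls}, namely by $\mathcal R_2(p,q)$ applied to the corresponding initial datum $v_0$, $v_0^{\natural}$, $w_0^{\natural}$, $w_0$. This is legitimate because, by construction in Proposition~\ref{Non-Self-Adjoint Correspondence Theorem}, each of $v,v^{\natural},w,w^{\natural}$ solves the \emph{periodic} linear Schr\"odinger problem, so Lemma~\ref{Sch Airy Periodic Revivals Lemma} applies verbatim to each of them.

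The only genuine bookkeeping step is to rewrite the four initial data in terms of $u_0$ and its reflection $u_0^{\natural}$. By definition \eqref{Non-Self-adjoint initial condition} one has $v_0(x)=e^{-ik_0x}u_0(x)$ and $w_0(x)=e^{ik_0x}u_0(x)$, which produce terms~1 and~4 of \eqref{Non-Self-adjoint Revival} directly after pulling the modulation $e^{\pm ik_0 x}$ outside the bracketed operator. For the reflected data I would use \eqref{Reflected initial condition}: since $v_0^{\natural}(x)=v_0(2\pi-x)=e^{-ik_0(2\pi-x)}u_0(2\pi-x)=e^{-2\pi ik_0}e^{ik_0x}u_0^{\natural}(x)$, and likewise $w_0^{\natural}(x)=e^{2\pi ik_0}e^{-ik_0x}u_0^{\natural}(x)$, the extra constant phases $e^{-2\pi ik_0}$ and $e^{2\pi ik_0}$ factor out and account exactly for the boundary factors $e^{-ik_02\pi}$ and $e^{ik_02\pi}$ appearing in terms~2 and~3. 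Collecting the modulations $e^{\pm ik_0x}$ outside and reading off the revival-operator brackets then yields \eqref{Non-Self-adjoint Revival} term by term.

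I expect no real obstacle once Proposition~\ref{Non-Self-Adjoint Correspondence Theorem} and Lemma~\ref{Sch Airy Periodic Revivals Lemma} are in hand, since the corollary is essentially a substitution. The one point that requires care is the interaction of the reflection $\natural$ with the modulation $e^{\pm ik_0x}$, which is precisely where the boundary phases $e^{\pm 2\pi ik_0}$ are generated; keeping track of these signs correctly is the entire content of the computation.
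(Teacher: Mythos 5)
Your proposal is correct and follows exactly the route the paper takes: the corollary is obtained by evaluating Proposition~\ref{Non-Self-Adjoint Correspondence Theorem} at $t_{\mathrm r}=2\pi p/q$ and substituting the periodic revival formula \eqref{R2ls} for each of $v,v^{\natural},w^{\natural},w$. Your explicit tracking of the phases $e^{\mp 2\pi i k_0}$ produced by commuting the reflection $\natural$ with the modulations $e^{\pm ik_0x}$ is the only computation involved, and you have it right.
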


\begin{remark}\label{v0expr}
In expression (\ref{Non-Self-adjoint Revival}), the solution is given explicitly in terms of a finite number of translated copies of $u_0(x)e^{\pm ik_0x}$. Note that the final result is then multiplied by $e^{\mp ik_0x}$, and hence the solution is indeed given in terms of a finite linear combination of translated copies of $u_0(x)$. This can be verified by substituting the expression for $\mathcal{R}_{2}(p,q)$ in the first part of formula (\ref{Non-Self-adjoint Revival}), to obtain
$$
e^{i k_{0}x} \mathcal{T}_{\frac{4\pi k_{0}p}{q}}\mathcal{R}_{2}(p,q)
			e^{-ik_0x}u_{0}(x)=e^{ik_0^2\frac{4\pi p}{q}}\mathcal{T}_{\frac{4\pi k_{0}p}{q}}\tilde{\mathcal{R}}_{2}(p,q)u_0(x),
$$
where  $\tilde{\mathcal{R}}_{2}(p,q)$ differs from  ${\mathcal{R}}_{2}(p,q)$ only in that each term $e_{m}(\frac{2\pi k}{q})$ is replaced by $e_{m}(\frac{2\pi k}{q} (1+\frac{k_0}{m}))$. The other three terms in the expression  (\ref{Non-Self-adjoint Revival}) for the solution can be handled similarly. 
\end{remark}

\begin{proof}[Proof of Proposition \ref{Non-Self-Adjoint Correspondence Theorem}]
 Consider each of the terms in the series \eqref{Non-Self-Adjoint Solution}. Using the definition (\ref{Non-Self-adjoint initial condition}) of $v_0$ and $w_0$,  we have   
	\begin{equation}
		\label{NSAT2}
		\begin{aligned}
			\int_{0}^{2\pi}u_{0}(y)\frac{e^{-ik_{j}y}}{\sqrt{2\pi}}dy + \bar{I}_{0}\int_{0}^{2\pi} u_{0}(y)\frac{e^{ik_{j}y}}{\sqrt{2\pi}}dy= \widehat{v_{0}}(j) + \bar{I}_{0}\widehat{w_{0}}(-j).
		\end{aligned}
	\end{equation}
Recall that $k_j=k_0+j$. Moreover, we have the elementary but key relation,
	\begin{equation}
		\label{NSAT3}
		e^{-ik_{j}^{2}t} = e^{-i k_{0}^{2}t} \ e^{-2k_0jt} \ e^{-ij^{2}t}
	\end{equation}
	and for the eigenfunctions
	\begin{equation}
		\label{NSAT4}
		\frac{e^{ik_{j}x}}{\sqrt{2\pi}} + \Lambda_{0} \frac{e^{-ik_{j}x}}{\sqrt{2\pi}} = e^{ i k_{0}x} e_{j}(x) + \Lambda_{0} e^{-ik_{0}x}e_{-j}(x).
	\end{equation} 
Here the $e_j(x)$ are the periodic eigenfunctions.

By substituting \eqref{NSAT2}, \eqref{NSAT3} and \eqref{NSAT4} in \eqref{Non-Self-Adjoint Solution} we obtain
	\begin{equation}
		\label{NSAT5}
		\begin{aligned}
			u(x,t) =  \frac{e^{-ik_{0}^{2}t}}{\tau} &\sum_{j\in\mathbb{Z}} e^{-i2k_0jt} e^{-ij^{2}t} \Big( e^{ik_{0}x}\widehat{v_{0}}(j) e_{j}(x) + \Lambda_{0} e^{-ik_{0}x}\widehat{v_{0}}(j) e_{-j}(x) \\
			& + \bar{I}_{0} e^{ik_{0}x}\widehat{w_{0}}(-j) e_{j}(x) + \Lambda_{0} \bar{I}_{0} e^{-ik_{0}x}\widehat{w_{0}}(-j) e_{-j}(x)\Big).
		\end{aligned}
	\end{equation} 
	Each term in  \eqref{NSAT5}  is the solution of a periodic problem. Indeed, from \eqref{Periodic Translation Operator FC} it follows that for $f \in\, L^2$,
$\mathcal{T}_sf(x)=\sum_{j\in\mathbb Z}e^{-ijs} \widehat{f}(j)e_j(x)$, hence we have
	\begin{equation}
		\label{NSAT6} 
		\sum_{j\in\mathbb{Z}} e^{-i2k_0jt} e^{-ij^{2}t} e^{ik_{0}x}\widehat{v_{0}}(j)e_{j}(x) = e^{ik_{0}x} 	\mathcal{T}_{2k_0t}	\Big(\sum_{j\in\mathbb{Z}}\widehat{v_{0}}(j)e^{-ij^{2}t}e_{j}(x) \Big) = e^{ik_{0}x} \mathcal{T}_{2k_0t} v(x,t),
	\end{equation}
	where $v(x,t)$ solves the periodic equation with initial condition $v_{0}(x)$. Similar calculation for the remaining terms yields the representation \eqref{Non-Self-adjoint Correspondence}.
\end{proof}

Note that for the self-adjoint case, $\beta_0\bar{\beta_1}=1$, the following reduction of \eqref{Non-Self-adjoint Correspondence} is valid,
\begin{equation}
		\label{Self-adjoint Correspondence}
		\begin{aligned}
			u(x,t) = \frac {e^{-i k_{0}^{2}t}}{1+|\Lambda_0|^2}\Big\{& e^{ik_{0}x} \mathcal{T}_{2k_0t} v(x,t) + \Lambda_{0} e^{-ik_{0}x} \mathcal{T}_{-2k_0t} v^{\natural}(x,t) \\
			& + \bar{\Lambda}_{0} e^{ik_{0}x} \mathcal{T}_{2k_0t} w^{\natural}(x,t) +| \Lambda_{0}|^2e^{-ik_{0}x} \mathcal{T}_{-2k_0t} w(x,t) \Big\},
		\end{aligned}
	\end{equation}
with all notation as in Proposition \ref{Non-Self-Adjoint Correspondence Theorem}.

 \subsection{The quasi-periodic  case}
We now describe the specific form of the solution of the quasi-periodic boundary value problem for \eqref{linS}, corresponding to $\beta_0=\beta_1=\beta$ in \eqref{Schrodinger Pseudo-periodic Problem}. This specific case appears to be of importance for the study of the vortex filament equation with non-zero torsion \cite{de2020evolution}. The self-adjoint case corresponds to $|\beta|^2=1$ and it has been studied in the context of quantum revivals,  as well as experimentally, in \cite{xue2014observation}. 

Set $\beta = e^{2\pi i\theta}$ for $\theta\in (0,1)$ in \eqref{Schrodinger Pseudo-periodic Problem}.  For $k_0$ and $\Lambda_0$ as in (\ref{Non-Self-adjoint Various CST}), we have
$$
\cos (2\pi k_0)=\frac {1+\beta^2}{2\beta}=\frac{1+e^{4\pi i \theta}}{2e^{2\pi i\theta}}=\cos(2\pi \theta).
$$
So we pick
$$
 k_0=\theta,\qquad 
\gamma=e^{2\pi i \theta}=\beta \quad \text{and} \quad \Lambda_0=\frac{\gamma-\beta}{\beta-\gamma^{-1}}=0.
$$
Substituting these values into \eqref{Self-adjoint Correspondence}, yields the significantly reduced expression,
\begin{equation}
		\label{quasi-periodic Self-adjoint Correspondence}
			u(x,t) = e^{-i\theta^{2}t} e^{i\theta x} \mathcal{T}_{2\theta t} v(x,t),
	\end{equation}
where $v(x,t)$ the solution of the periodic problem with initial condition $v_0(x)$ as in \eqref{Non-Self-adjoint initial condition}. In particular, at rational times we obtain the representation formula
\begin{equation}
\label{quasiperrev}
u\left(x,2\pi\frac pq\right)=e^{-i\theta^{2}2\pi\frac pq} e^{i\theta x} \mathcal{T}_{4\pi\theta \frac pq}\mathcal{R}_{2}(p,q)
			e^{-i\theta x}u_{0}(x).
			\end{equation}

\begin{remark}The comment made in Remark~\ref{v0expr}, applies also to the revival expression \eqref{quasiperrev}. The latter can also be obtained directly, by expanding the solution in terms of the eigenfunctions 
of the associated spatial operator and their adjoint pair.
\end{remark}

\section{Quasi-periodic problems for the Airy equation}\label{Airy's Quasi-Periodic Problem}

We now turn to the time evolution problem for the Airy equation with quasi-periodic boundary conditions, defined by  \eqref{airy}--\eqref{qpbc} with
$\beta= e^{i2\pi\theta}$ for $\theta\in[0,1)$, namely
\begin{equation}
	\label{Airy QPP}
	\begin{aligned}
	&u_t(x,t) - u_{xxx}(x,t)=0, \qquad u(x,0) = u_{0}(x), \\
	&e^{i2\pi\theta} \partial_{x}^{m} u(0,t) = \partial_{x}^{m}u(2\pi,t), \quad m = 0,1,2.
	\end{aligned}
\end{equation}
We give the proof of Theorem \ref{Airy Correspondence Theorem}, which describes the solution of \eqref{Airy QPP} in terms of the solution of a periodic problem for the linear Schr\"odinger equation.

The spatial operator $i\partial_x^3$ with the given boundary conditions is self-adjoint. Moreover, unlike the general quasi-periodic boundary conditions, we can find the eigenpairs of this operator explicitly. Because of this, it is possible to argue in similar fashion as in Section~\ref{pseudo ls}.  This leads to the conclusion that, in contrast to the  linear Schr\"odinger equation, it is not possible to establish a direct correspondence between the solution of \eqref{Airy QPP} and the solution of one or more periodic problems {\em evaluated  at the same time}. The correspondence that we establish in Theorem \ref{Airy Correspondence Theorem}, connects the solution of the Airy equation at a rational time $t_{\mathrm{r}}$  to the solution of an associated problem for the linear Schr\"odinger equation evaluated at a time $t_{\theta}$ that depends on $t_{\mathrm{r}}$ and on $\theta$. As a consequence, we show below that revivals for problem \eqref{Airy QPP} arise  if and only if $\theta\in\mathbb{Q}$. 

The eigenvalue problem is now given by
\begin{equation}
\label{Non-Self-adjoint eigenvalue problem airy}
	-\phi'''(x) = i\lambda \phi(x), \quad e^{i2\pi\theta}\phi(0)=\phi(2\pi), \,e^{i2\pi\theta}\phi'(0)=\phi'(2\pi),
	\, e^{i2\pi\theta}\phi''(0)=\phi''(2\pi).
\end{equation}  
Hence, it is straightforward to compute that the eigenvalues are given by 
\begin{equation}
	\label{Airy Eigenvalue}
	\lambda_{m} = k_{m}^{3}, \qquad k_{m} =m + \theta, \quad m\in \mathbb{Z}
\end{equation}
and the corresponding normalized eigenfunctions  by
\begin{equation}
	\label{Airy Eigenfunctions}
	\phi_{m} (x)= \frac{e^{ik_{m}x}}{\sqrt{2\pi}}=e^{i\theta x}e_m(x), \quad m\in \mathbb{Z}.
\end{equation}
Thus, for any fixed time $t\geq 0$ and initial $u_{0} \in L^2$, the solution to \eqref{Airy QPP} is 
\begin{equation}
	\label{Airy QP Solution}
	u(x,t) = \sum_{m\in\mathbb{Z}} \langle u_{0} , \phi_{m}\rangle e^{-ik_{m}^{3}t} \phi_{m} (x).
\end{equation}
We are now ready to prove Theorem~\ref{Airy Correspondence Theorem}.
\begin{proof}[Proof of Theorem \ref{Airy Correspondence Theorem}]
According to \eqref{Airy Eigenfunctions},
\begin{equation}
	\label{ACT3}
	\langle u_{0}, \phi_{j}\rangle = \int_0^{2\pi}u_0(x)e^{-i\theta x}\overline{e_j(x)}dx=\widehat{w_{0}}(j), \quad w_0(x)=u_0(x)e^{-i\theta x}.
\end{equation}
The exponential term $e^{-ik_{j}^{3}t_{\mathrm{r}}}$ can be written as 
\begin{equation}
	\label{ACT4}
	e^{-ik_{j}^{3}t_{\mathrm{r}}} = e^{-i(j+\theta)^{3}t_{\mathrm{r}}}=
	e^{-i\theta^{3}t_{\mathrm{r}}} e^{-i j^3t_{\mathrm{r}}} e^{-i j3\theta^2 t_{\mathrm{r}}} e^{-ij^{2}3\theta t_{\mathrm{r}}}.
\end{equation}
Substituting all this into \eqref{Airy QP Solution} for the solution of \eqref{Airy QPP}, we find
\begin{equation}
	\label{ACT5}
	\begin{aligned}
	u(x,t_{\mathrm{r}}) &= \sum_{j\in\mathbb{Z}} \langle u_{0} , \phi_{j}\rangle e^{-ik_{j}^{3}t_{\mathrm{r}}} \phi_{j} (x)\\
	&=\sum_{j\in\mathbb{Z}}\widehat{w_{0}}(j)  e^{-i\theta^{3}t_{\mathrm{r}}} e^{-i j^3t_{\mathrm{r}}} e^{-i j3\theta^2 t_{\mathrm{r}}} e^{-ij^{2}3\theta t_{\mathrm{r}}} e^{i\theta x}e_j(x)\\
	&=e^{-i\theta^{3}t_{\mathrm{r}}}e^{i\theta x}\sum_{j\in\mathbb{Z}}\widehat{w_{0}}(j)  e^{-i j^3t_{\mathrm{r}}} e^{-i j3\theta^2 t_{\mathrm{r}}} e^{-ij^{2}3\theta t_{\mathrm{r}}} e_j(x) \\
	& = e^{-i\theta^{3}t_{\mathrm{r}}}e^{i\theta x} \mathcal{T}_{3\theta^2t_{\mathrm{r}}} \Big(\sum_{j\in\mathbb{Z}}\widehat{w_{0}}(j)  e^{-i j^3t_{\mathrm{r}}}  e^{-ij^{2}3\theta t_{\mathrm{r}}} e_j(x) \Big).
	\end{aligned}
\end{equation}
For the last equality we have used the Fourier representation \eqref{Periodic Translation Operator FC} of the translation operator $\mathcal{T}_{s}$. 

Now, by virtue of Lemma~\ref{Revival Operator Lemma}, 
$$
\widehat{w_0}(j) e^{-ij^{3}t_{\mathrm{r}}}=\langle \mathcal{R}_{3}(p,q)w_0,e_{j}\rangle =\langle v^{(p,q)}_{0}, e_j\rangle=\widehat{v^{(p,q)}_{0}}(j),
$$
where the function $v^{(p,q)}_{0}(x)$ is given by \eqref{Airy correspondence IC}.  Substituting this final identity into \eqref{ACT5}, gives
\begin{equation}
	\label{ACT6}
		u(x,t_{\mathrm{r}}) =e^{-i\theta^{3}t_{\mathrm{r}}}e^{i\theta x} \mathcal{T}_{3\theta^2t_{\mathrm{r}}} \Big(\sum_{j\in\mathbb{Z}}\widehat{v^{(p,q)}_{0}}(j)   e^{-ij^{2}3\theta t_{\mathrm{r}}} e_j(x) \Big) 
		= e^{-i\theta^{3}t_{\mathrm{r}}}e^{i\theta x} \mathcal{T}_{3\theta^2t_{\mathrm{r}}}  v^{(p,q)}(x, 3\theta t_{\mathrm{r}}).
\end{equation}
as claimed.
\end{proof}

The fundamental difference with the case of the linear Schr\"odinger equation analysed in the previous section, lies in the fact that the solution of the quasi-periodic problem for the Airy equation corresponds to the solution of a suitable periodic problem but {\em evaluated at a different time}.  Indeed, Theorem \ref{Airy Correspondence Theorem} states that the solution of \eqref{Airy QPP} at time $t=t_{\mathrm{r}}$ is obtained via the solution of a periodic problem for the Schr\"odinger equation evaluated at time $t=3\theta t_{\mathrm{r}}$. If $\theta\notin\mathbb Q$, this is an irrational time, for which the fractalisation result of Theorem \ref{Fractalisation LS} applies. From this it follows that,  the quasi-periodic Airy problem exhibits revivals at rational times if and only if $\theta\in\mathbb Q$. To be more precise, we have the following two  possibilities.

\begin{enumerate}
\item \underline{Case $\theta\in\mathbb Q$}. The time $t=3\theta t_{\mathrm{r}}$  is a rational time for Schr\"{o}dinger's periodic problem. Hence Airy's quasi-periodic problem will exhibit revivals at any rational time $t_{\mathrm{r}}$.

\item \underline{Case $\theta\notin\mathbb Q$}. The time $t=3\theta t_{\mathrm{r}}$  is irrational for Schr\"{o}dinger's periodic problem. It follows that the solution of Airy's quasi-periodic problem at  rational times $t_{\mathrm{r}}$ is a continuous but nowhere differentiable function, and there is no revival at rational times in this case.
\end{enumerate}

We now establish a representation formula for the solution at rational times, which implies the validity of the revival phenomenon observed in \eqref{Airy QPP} in the case $\theta\in\mathbb Q$. The proof of the next statement is a direct consequence of combining Theorem~\ref{Airy Correspondence Theorem} with Lemma~\ref{Sch Airy Periodic Revivals Lemma}.

\begin{corollary}[Quasi-Periodic Revival] \label{Airy QP Revival}
Let $(p,q),\,(c,d)$ be pairs of co-prime positive integers, with $c<d$. Set $\theta_{\mathrm{r}}= c/d <1$. Let $u_{0}\in L^2$. For $\theta=\theta_{\mathrm{r}}$, the solution $u(x,t)$ of the linear Airy equation with  initial condition given by $u_0$ and quasi-periodic boundary conditions \eqref{Airy QPP}, at rational time $t_{\mathrm{r}} = 2\pi\frac{p}{q}$ is given by,
\begin{equation}
	\label{Airy QP Revival Formula}
	u(x,t_{\mathrm{r}}) = e^{i\theta_{\mathrm{r}} x} \left[e^{-i \theta_{\mathrm{r}}^{3}t_{\mathrm{r}}}  \mathcal{T}_{3\theta_{\mathrm{r}}^2t_{\mathrm{r}}} \mathcal{R}_{2}(3cp,dq)  \mathcal{R}_{3}(p,q)\right] e^{-i\theta_{\mathrm{r}} x}u_{0}(x).
\end{equation}
\end{corollary}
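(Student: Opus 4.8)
The plan is to specialise Theorem~\ref{Airy Correspondence Theorem} to $\theta = \theta_{\mathrm{r}} = c/d$ and then to replace the periodic Schr\"odinger evolution that appears there by its explicit revival form from Lemma~\ref{Sch Airy Periodic Revivals Lemma}. Concretely, Theorem~\ref{Airy Correspondence Theorem} gives
\[
u(x,t_{\mathrm{r}}) = e^{-i\theta_{\mathrm{r}}^{3} t_{\mathrm{r}}}\,e^{i\theta_{\mathrm{r}} x}\,\mathcal{T}_{3\theta_{\mathrm{r}}^{2} t_{\mathrm{r}}}\,v^{(p,q)}(x, 3\theta_{\mathrm{r}} t_{\mathrm{r}}),
\]
where $v^{(p,q)}(x,t)$ solves the periodic Schr\"odinger problem with initial datum $v^{(p,q)}_{0} = \mathcal{R}_{3}(p,q)[u_{0}\,e^{-i\theta_{\mathrm{r}} x}]$. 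The entire content of the corollary is therefore to evaluate this periodic solution at the single time $t = 3\theta_{\mathrm{r}} t_{\mathrm{r}}$.

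First I would observe that, since $\theta_{\mathrm{r}} = c/d$ and $t_{\mathrm{r}} = 2\pi p/q$, the time at which the periodic problem is evaluated is $3\theta_{\mathrm{r}} t_{\mathrm{r}} = 2\pi\frac{3cp}{dq}$, which is a rational time in the sense of \eqref{Rational Time}. By Lemma~\ref{Sch Airy Periodic Revivals Lemma}, equation \eqref{R2ls}, the periodic Schr\"odinger solution at such a time is obtained by applying the second order revival operator to its initial datum, so that $v^{(p,q)}(x, 3\theta_{\mathrm{r}} t_{\mathrm{r}}) = \mathcal{R}_{2}(3cp, dq)\,v^{(p,q)}_{0}(x)$. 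Substituting $v^{(p,q)}_{0} = \mathcal{R}_{3}(p,q)[u_{0}\,e^{-i\theta_{\mathrm{r}} x}]$ and absorbing the scalar $e^{-i\theta_{\mathrm{r}}^{3} t_{\mathrm{r}}}$ into the bracket then yields exactly \eqref{Airy QP Revival Formula}, with the composition $\mathcal{R}_{2}(3cp, dq)\mathcal{R}_{3}(p,q)$ acting on $e^{-i\theta_{\mathrm{r}} x}u_{0}(x)$ and the outer factor $e^{i\theta_{\mathrm{r}} x}$ restoring the overall modulation.

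The one point that requires care, and the main (though modest) obstacle, is that the fraction $\frac{3cp}{dq}$ need not be in lowest terms, whereas Lemma~\ref{Sch Airy Periodic Revivals Lemma} and the revival operator of Definition~\ref{Periodic Revival Operator} are phrased for co-prime indices. I would resolve this by noting that the relevant object depends only on the \emph{value} of the rational time and not on the chosen representation. Reducing $\frac{3cp}{dq} = \frac{P}{Q}$ to lowest terms, the periodic revival formula provides $\mathcal{R}_{2}(P,Q)\,v^{(p,q)}_{0}$, and I would then verify that $\mathcal{R}_{2}(P,Q) = \mathcal{R}_{2}(3cp, dq)$ as operators on $L^{2}$. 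This follows from the Fourier-coefficient characterisation \eqref{Revival Operator Lemma 1}: the computation in the proof of Lemma~\ref{Revival Operator Lemma} never uses co-primality, since both the orthogonality of roots of unity and the congruence $m^{\ell} \equiv j^{\ell} \pmod{Q}$ hold for any positive modulus. Hence $\mathcal{R}_{2}(P,Q)$ and $\mathcal{R}_{2}(3cp, dq)$ are Fourier multipliers with the identical symbol $e^{-ij^{2} 2\pi \frac{3cp}{dq}}$ and therefore coincide on the orthonormal basis $\{e_{j}\}$. With this identification in hand the substitution is immediate and the corollary follows.
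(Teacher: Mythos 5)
Your argument is correct and is exactly the route the paper intends: the paper states that Corollary~\ref{Airy QP Revival} is ``a direct consequence of combining Theorem~\ref{Airy Correspondence Theorem} with Lemma~\ref{Sch Airy Periodic Revivals Lemma}'', which is precisely your substitution of the periodic revival formula at the rational time $3\theta_{\mathrm{r}}t_{\mathrm{r}}=2\pi\tfrac{3cp}{dq}$. Your additional verification that $\mathcal{R}_{2}(3cp,dq)$ coincides with $\mathcal{R}_{2}(P,Q)$ for the reduced fraction, via the Fourier-multiplier identity \eqref{Revival Operator Lemma 1}, addresses a point the paper leaves implicit and is a welcome extra.
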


The comment made in Remark~\ref{v0expr}, applies also to the revival expression  \eqref{Airy QP Revival Formula}. Indeed, the latter has an alternative representation  in terms of the eigenfunctions $\phi_m(x)$ of the spatial quasi-periodic operator given by \eqref{Airy Eigenfunctions}.  This alternative representation is the direct analogue of the representation in Theorem \ref{ET Theorem} for the periodic case, with a modified revival operator $\widetilde{\mathcal{R}_3}$ defined in terms of the eigenfunctions of the quasi-periodic problem directly. 
We state this representation, without proof. It can be obtained from algebraic manipulations of the expression \eqref{Airy QP Revival Formula}, or directly following the lines of the proof of Lemma~\ref{Revival Operator Lemma}. 

\begin{proposition}
Let $p,\,q,\,c,\,d$, $u_0(x)$ and $\theta_{\mathrm{r}}$ be as in the previous statement. Let $u(x,t)$ denote the solution of Airy's quasi-periodic problem \eqref{Airy QPP}, with $\theta=\theta_{\mathrm{r}}$.
The solution $u(x,t_{\mathrm{r}})$ at rational time $t_{\mathrm{r}} = 2\pi\frac{p}{q}$ admits the representation
\begin{equation}
\label{Airyaltrep}
u(x,t_{\mathrm{r}})=\frac{\sqrt{2\pi}}{d^2 q}\sum_{k=0}^{d^2q -1} \sum_{m=0}^{d^2 q-1}e^{-i(m+\frac c d)^3t}\phi_m \left(\frac{\pi k}{dq}\right)
\tilde{u}_0\left(x-\frac{\pi k}{2dq}\right).
\end{equation}
Here $\phi_m(x)$ are the eigenfunctions of the spatial operator given by \eqref{Airy Eigenfunctions} and $\tilde{u}_0(x)$ is the quasi-periodic extension of $u_0$, 
$$
\tilde{u}_0(x)=e^{2\pi i \frac{c}{d} m} u_0(x-2\pi m),\qquad 2\pi m\leq x<2\pi (m+1).
$$
\end{proposition}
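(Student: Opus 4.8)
The plan is to follow the lines of the proof of Lemma~\ref{Revival Operator Lemma}, working directly with the spectral expansion \eqref{Airy QP Solution} and the explicit quasi-periodic eigenfunctions \eqref{Airy Eigenfunctions}. Denote by $S$ the operator defined by the right-hand side of \eqref{Airyaltrep}, so that $Su_0$ is a finite linear combination of translates of the quasi-periodic extension $\tilde u_0$. Since the $\phi_m(x)=e^{i\theta x}e_m(x)$ form a complete orthonormal system, it suffices to show that $Su_0$ and the solution $u(\cdot,t_{\mathrm{r}})$ have the same quasi-periodic Fourier coefficients $\langle\,\cdot\,,\phi_j\rangle$ for every $j\in\mathbb Z$; by \eqref{Airy QP Solution} the latter equals $e^{-ik_j^3 t_{\mathrm{r}}}\langle u_0,\phi_j\rangle$. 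The one elementary ingredient needed is the quasi-periodic analogue of \eqref{Periodic Translation Operator FC}, namely that translating $\tilde u_0$ by $s$ multiplies its $j$-th coefficient by $e^{-ik_j s}$, which follows at once from $\phi_j(x-s)=e^{-ik_j s}\phi_j(x)$.

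The arithmetic core of the argument, and the exact analogue of the congruence $m^\ell\equiv j^\ell\ (\mathrm{mod}\ q)$ exploited in Lemma~\ref{Revival Operator Lemma}, is the periodicity of the time-evolution symbol. Writing $k_m=(dm+c)/d$ from \eqref{Airy Eigenvalue}, one has $k_m^3 t_{\mathrm{r}}=2\pi p\,(dm+c)^3/(d^3q)$; expanding the cube under $m\mapsto m+d^2q$ shows that the exponent increases by an integer multiple of $2\pi$, so that $e^{-ik_m^3 t_{\mathrm{r}}}$ is periodic in $m$ with period $N=d^2q$. This is precisely the step where the cubic dispersion together with the fractional shift $\theta=c/d$ forces the enlarged modulus $d^2q$ in place of the modulus $q$ of the purely periodic case.

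With these two facts in hand, I would compute $\langle Su_0,\phi_j\rangle$ by applying the translate identity to each term of \eqref{Airyaltrep} and then carrying out the inner sum over $k$. Exactly as in the proof of Lemma~\ref{Revival Operator Lemma}, this sum of roots of unity collapses to $N$ times the indicator of $m\equiv j\ (\mathrm{mod}\ N)$; the $N$-periodicity of the symbol established above then replaces $e^{-ik_m^3 t_{\mathrm{r}}}$ by $e^{-ik_j^3 t_{\mathrm{r}}}$, and the remaining normalisation reduces the double sum to $e^{-ik_j^3 t_{\mathrm{r}}}\langle u_0,\phi_j\rangle$. Since this matches the coefficient of the true solution, completeness of $\{\phi_j\}$ yields $Su_0=u(\cdot,t_{\mathrm{r}})$. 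Alternatively, the same identity can be reached by the route suggested after the statement: starting from \eqref{Airy QP Revival Formula}, substitute the explicit finite-sum definitions \eqref{Revival Operator} of $\mathcal R_2(3cp,dq)$ and $\mathcal R_3(p,q)$, compose the two finite families of translates over the common modulus $d^2q$, and absorb the modulations $e^{\pm i\theta_{\mathrm{r}}x}$ and the translation $\mathcal T_{3\theta_{\mathrm{r}}^2 t_{\mathrm{r}}}$ into $\tilde u_0$.

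I expect the main obstacle to be organisational rather than conceptual: keeping track of the accumulated modulation phases $e^{\pm i\theta x}$ and of the grid of translation steps, and re-indexing the composed translates over the modulus $d^2q$, so as to recognise the resulting Gauss-type coefficient as the eigenfunction weight $\phi_m$ evaluated on that grid and to pin down the correct overall normalisation $\sqrt{2\pi}/(d^2q)$. Both routes reduce to this same bookkeeping, and the direct one is cleaner since it reuses verbatim the orthogonality computation already performed in Lemma~\ref{Revival Operator Lemma}.
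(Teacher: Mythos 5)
The paper states this proposition \emph{without} proof, only gesturing at the two routes you name, so there is no written argument to compare against; your outline is the natural implementation of the first route. Its skeleton is sound: the quasi-periodic translate identity $\langle \tilde{u}_0(\cdot-s),\phi_j\rangle=e^{-ik_js}\langle u_0,\phi_j\rangle$ is correct (since $\phi_j$ extends to $\mathbb{R}$ with the same quasi-periodicity factor $e^{2\pi i c/d}$ as $\tilde u_0$), and your verification that $e^{-ik_m^3 t_{\mathrm{r}}}$ is periodic in $m$ with period $N=d^2q$ is exactly the arithmetic input needed; indeed $N=dq$ would not suffice, because the corresponding increment of the exponent is $\tfrac{2\pi p}{d}(3a^2+3ab+b^2)\equiv \tfrac{6\pi p c^2}{d}$ $(\operatorname{mod}\,2\pi)$ with $a=dm+c$, which is not generally a multiple of $2\pi$.

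The gap is in the one step you assert ``exactly as in Lemma~\ref{Revival Operator Lemma}'' instead of computing: for the formula \emph{as printed} that step fails. Writing $\langle Su_0,\phi_j\rangle$ out, the $k$-th term carries the phase
\begin{equation}
\phi_m\left(\tfrac{\pi k}{dq}\right)e^{-ik_j\frac{\pi k}{2dq}}=\tfrac{1}{\sqrt{2\pi}}\,e^{i\frac{\pi k}{2dq}(2k_m-k_j)},\qquad 2k_m-k_j=2m-j+\tfrac{c}{d},
\end{equation}
so the inner sum over $k$ is a geometric sum in a primitive $(4d^2q)$-th root of unity and does \emph{not} reduce to $N$ times the indicator of $m\equiv j$ $(\operatorname{mod}\,N)$. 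The collapse you invoke requires the sampling point of $\phi_m$ and the translation step to coincide and to equal $\tfrac{2\pi k}{d^2q}$, for then the phase is $e^{i(k_m-k_j)\frac{2\pi k}{N}}=e^{i(m-j)\frac{2\pi k}{N}}$, the fractional parts $c/d$ cancelling; this grid is also the one actually generated by composing $\mathcal{T}_{3\theta_{\mathrm{r}}^2t_{\mathrm{r}}}$, $\mathcal{R}_2(3cp,dq)$ and $\mathcal{R}_3(p,q)$ in \eqref{Airy QP Revival Formula}, whose translation steps $\tfrac{2\pi k_1}{dq}+\tfrac{2\pi k_2}{q}+\tfrac{2\pi\cdot 3c^2p}{d^2q}$ are all multiples of $\tfrac{2\pi}{d^2q}$. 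So either you correct the arguments $\tfrac{\pi k}{dq}$ and $\tfrac{\pi k}{2dq}$ in \eqref{Airyaltrep} to $\tfrac{2\pi k}{d^2 q}$ (and $t$ to $t_{\mathrm{r}}$), after which your argument closes verbatim, or the proposal as written proves an identity different from the stated one. Carry out the $k$-sum explicitly rather than citing the analogy: it is the only point where the cubic dispersion and the fractional shift interact, and it is precisely where your sketch and the printed statement part ways.
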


In appendix~\ref{Numerical Examples}, we illustrate with several numerical examples the revival behaviour described by the results in this section. 

\begin{remark}
By an induction argument, the above results can generalise to higher order equations with a monic dispersion relation $P(k)=k^p$, $p\geq 4$.
\end{remark}


\section{The Linear Schr{\"o}dinger equation with Robin boundary conditions}

In this final section we consider the linear Schr\"{o}dinger equation \eqref{linS} posed on $(0,\pi)$, but now we impose the  Robin boundary conditions \eqref{rbc}. Namely, the problem we consider is
\begin{equation}
	\label{Robin Problem}
	\begin{aligned}
		& i u_{t} + u_{xx}=0, \quad u(x,0) = u_{0}(x)\in L^2(0,\pi),\\
		&b u(x_{0},t) = (1-b) \partial_{x}u(x_{0},t),\  x_{0} = 0, \ \pi,\quad b \in [0,1],
	\end{aligned}
\end{equation}
and we give the proof of Theorem \ref{Robin Revival Corollary}, whose results describes the behaviour of the solution of \eqref{Robin Problem} at rational times.

A routine calculation shows that the eigenvalues and the normalised eigenfunctions of the spatial operator are as follows. When $0<b<1$, there is one negative eigenvalue, which depends on the parameter $b$, given  by
\[
\lambda_{b} = - m_{b}^{2}<0 ,\quad m_{b} = \frac{b}{1-b},
\]
with associated normalised eigenfunction
\[
\phi_{b} (x) = A_{b} e^{m_{b}x}, \qquad A_{b} = \sqrt{\frac{2m_{b}}{e^{2\pi m_{b}}-1}}.
\]
The rest of the spectrum is the sequence of eigenvalues, independent of $b$, given by
$
\lambda_{j} = j^{2}>0$, $j\in\,\mathbb{N}
$
with associated normalised eigenfunctions,
\[ 
		 \phi_{j} (x) = \frac{1}{\sqrt{2\pi}} \left[e^{ijx} - \Lambda_{j} e^{-ijx}\right], \qquad \Lambda_{j} = \frac{b-(1-b)i j}{b+(1-b)ij}. 
\]
Note that the cases $b\rightarrow1$ and $b\rightarrow0$ correspond to Dirichlet and Neumann boundary conditions respectively. It is a routine calculation to verify that, by taking the even or odd extension, these can be treated as periodic problems posed on the double-length  interval $(0,2\pi)$. 

In order to simplify the presentation we set the following notation. For $f\in L^2(0,\pi)$, the \emph{even} and \emph{odd} extensions of $f$ to the segment $[0,2\pi]$ are denoted by
\begin{equation}
	\label{EvenOdd extension initial condition}
	 f^{\pm }(x)  = 
	\begin{cases}
		f(x),    & 0\leq x<\pi,  \\
		\pm f(2\pi-x), & \pi \leq x<2\pi,
	\end{cases} 
\end{equation}
and we  write the $2\pi$-periodic convolution of $f,\,g\in L^2(0,2\pi)$, as
	\begin{equation}
		\label{Periodic Convolution}
		f\ast g (x) = \frac{1}{\sqrt{2\pi}} \int_{0}^{2\pi} {f}^{*}(x-y) g^*(y) dy, \quad x\in(0,2\pi),
	\end{equation}
where the symbol $\ast$ on top of functions denotes the $2\pi$-periodic extension as in \eqref{Periodic Extension}.

Finally, as in the statement of Theorem \ref{Robin Revival Corollary}, we define
	\begin{equation}
		\label{f1f2}
		f_{1}(x) = \sqrt{\frac\pi 2}\frac{ m_{b}}{e^{2\pi m_{b}}-1} e^{m_{b}x},\qquad x\in(0,2\pi).
	\end{equation}

We first state a representation of the solution of \eqref{Robin Problem}  in terms of the solutions of five periodic problems for \eqref{linS}, each with an initial condition specified by an explicit transformation of $u_{0}$. Four of these initial conditions are  obtained as the $2\pi$-periodic convolution of an explicit exponential $2\pi$-periodic function with corresponding odd or even $2\pi$-periodic extensions of the initial data. 
\begin{proposition}\label{RCP}
	\label{Robin Connection Proposition}
	Let $u_{0}\in L^{2}(0,\pi)$, and consider the following solutions to the $2\pi$-periodic problem for equation \eqref{linS}:
\begin{itemize}
\item $n(x,t)$ denotes the solution corresponding to initial condition $n_{0}(x) = u_{0}^{+}(x)$ 
\item $h(x,t)$ denotes the solution corresponding to initial condition $h_{0}(x) = (f_{1}+f_{1}^\natural)\ast u_{0}^{+}(x)$ 
\item $v(x,t)$ denotes the solution corresponding to initial condition $v_{0}(x) = (f_{1}^\natural - f_{1})\ast u_{0}^{+}(x)$
\item $z(x,t)$ denotes the solution corresponding to initial condition $z_{0}(x) =(f_{1}-f_{1}^\natural)\ast u_{0}^{-}(x)$
\item $w(x,t)$ denotes the solution corresponding to initial condition $w_{0}(x) = (f_{1}+f_{1}^\natural) \ast u_{0}^{-}(x)$,
\end{itemize}
where $f_1(x)$ is  defined by \eqref{f1f2} and $\natural$ denotes reflection as given in \eqref{Reflected initial condition}. Then, at each $t\geq 0$ the solution $u(x,t)$ to the Robin problem \eqref{Robin Problem} is given by
	\begin{equation}
		\label{Robin Connection}
		u(x,t) =\langle u_{0},\phi_{b}\rangle_{L^{2}(0,\pi)} e^{im_{b}^{2}t} \phi_{b}(x) + n(x,t) - h(x,t) +v(x,t) + z(x,t) + w(x,t).
	\end{equation}
\end{proposition}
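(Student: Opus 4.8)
The plan is to start from the spectral representation of the solution to the Robin problem \eqref{Robin Problem}. Since $b$ is real, the spatial operator $-\partial_x^2$ with the boundary conditions \eqref{rbc} is self-adjoint, so its eigenfunctions $\{\phi_b\}\cup\{\phi_j\}_{j\in\mathbb N}$ form an orthonormal basis of $L^2(0,\pi)$ and the solution is
\begin{equation*}
u(x,t)=\langle u_0,\phi_b\rangle_{L^2(0,\pi)}e^{im_b^2t}\phi_b(x)+\sum_{j\ge 1}\langle u_0,\phi_j\rangle_{L^2(0,\pi)}e^{-ij^2t}\phi_j(x),
\end{equation*}
using $\lambda_b=-m_b^2$ and $\lambda_j=j^2$. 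The first term already coincides with the leading term of \eqref{Robin Connection}, so it suffices to prove that the remaining series equals $n-h+v+z+w$.

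The strategy is to match both sides coefficient-by-coefficient against the periodic Fourier basis $\{e_m\}_{m\in\mathbb Z}$. Every periodic solution has the form \eqref{Linear Evolution Problems solution} with dispersion $e^{-im^2t}$; since $n,h,v,z,w$ all carry the same time factor, the coefficient of $e^{-im^2t}e_m$ in $n-h+v+z+w$ is $\widehat{n_0}(m)-\widehat{h_0}(m)+\widehat{v_0}(m)+\widehat{z_0}(m)+\widehat{w_0}(m)$. To evaluate these I compute $\widehat{f_1}(m)=\tfrac{m_b}{2(m_b-im)}$ directly from \eqref{f1f2}, and observe $\widehat{f_1^\natural}(m)=\widehat{f_1}(-m)$ from \eqref{Reflected initial condition}; hence $\widehat{f_1+f_1^\natural}(m)=\tfrac{m_b^2}{m_b^2+m^2}$ and $\widehat{f_1^\natural-f_1}(m)=-\tfrac{im_bm}{m_b^2+m^2}$. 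Combining these with the convolution rule $\widehat{f\ast g}(m)=\widehat f(m)\widehat g(m)$ (valid for the scaling in \eqref{Periodic Convolution}) and grouping the $u_0^+$ contributions ($n,h,v$) and the $u_0^-$ contributions ($z,w$) separately, the coefficient collapses to $\tfrac{m^2-im_bm}{m_b^2+m^2}\widehat{u_0^+}(m)+\tfrac{m_b^2+im_bm}{m_b^2+m^2}\widehat{u_0^-}(m)$.

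On the other side, I expand the eigenfunctions as $\phi_j=e_j-\Lambda_j e_{-j}$, so that the coefficient of $e^{-im^2t}e_m$ in the series equals $\langle u_0,\phi_m\rangle_{L^2(0,\pi)}$ for $m\ge 1$, equals $-\Lambda_{|m|}\langle u_0,\phi_{|m|}\rangle_{L^2(0,\pi)}$ for $m\le -1$, and vanishes for $m=0$. Writing $\int_0^\pi u_0(x) e^{\mp ijx}\,dx$ in terms of the even and odd Fourier data yields $\langle u_0,\phi_j\rangle_{L^2(0,\pi)}=\tfrac12(1-\overline{\Lambda_j})\widehat{u_0^+}(j)+\tfrac12(1+\overline{\Lambda_j})\widehat{u_0^-}(j)$. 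The proof is then finished by checking, for every $m\in\mathbb Z$, that this matches the coefficient found in the previous step; here I would invoke the parities $\widehat{u_0^+}(-m)=\widehat{u_0^+}(m)$, $\widehat{u_0^-}(-m)=-\widehat{u_0^-}(m)$ and $\widehat{u_0^-}(0)=0$ to treat the three cases $m\ge1$, $m=0$, $m\le -1$ uniformly.

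The only genuinely delicate point is the final algebraic identity relating the rational functions of $(m,m_b)$ produced by the $f_1$-convolutions to the reflection coefficients $\Lambda_j=\tfrac{m_b-ij}{m_b+ij}$. The mechanism that makes it work is the pair of factorisations $j-im_b=-i(m_b+ij)$ and $j+im_b=i(m_b-ij)$ together with $m_b^2+j^2=(m_b+ij)(m_b-ij)$; these reduce $\tfrac{m^2-im_bm}{m_b^2+m^2}$ and $\tfrac{m_b^2+im_bm}{m_b^2+m^2}$ at $m=j\ge1$ to $\tfrac{-ij}{m_b-ij}=\tfrac12(1-\overline{\Lambda_j})$ and $\tfrac{m_b}{m_b-ij}=\tfrac12(1+\overline{\Lambda_j})$, and likewise reproduce the factor $-\Lambda_j$ at the conjugate negative frequency $m=-j$. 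I expect the careful bookkeeping of the conjugation and sign in $\Lambda_j$ versus $\overline{\Lambda_j}$, and of the parity flips at negative frequencies, to be the main source of potential error; everything else is routine $L^2$ manipulation, justified by unitarity of the periodic flow.
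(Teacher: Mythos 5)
Your proposal is correct: all the key computations check out ($\widehat{f_1}(m)=\tfrac{m_b}{2(m_b-im)}$, the multipliers $\tfrac{m_b^2}{m_b^2+m^2}$ and $\mp\tfrac{im_bm}{m_b^2+m^2}$, the identity $\langle u_0,\phi_j\rangle_{L^2(0,\pi)}=\tfrac12(1-\overline{\Lambda_j})\widehat{u_0^+}(j)+\tfrac12(1+\overline{\Lambda_j})\widehat{u_0^-}(j)$, and the matching at $m=\pm j$ and $m=0$), and this is essentially the argument the paper intends, since it omits the proof as ``entirely analogous'' to that of Proposition~\ref{Non-Self-Adjoint Correspondence Theorem}, namely expanding in the Robin eigenbasis, converting to the periodic Fourier basis, and recognising the $\Lambda_j$-dependence as convolution with $f_1$ and $f_1^\natural$. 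The only cosmetic difference is that you verify the stated decomposition coefficient-by-coefficient rather than deriving it by regrouping the eigenfunction series, which proves the same identity.
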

We omit the proof of this proposition, which  is  entirely analogous to the proof of Proposition~\ref{Non-Self-Adjoint Correspondence Theorem}. Various numerical examples 
which illustrate revival and non-revival for \eqref{Robin Problem} are given in Appendix~\ref{NumforA}. 

The proof of Theorem \ref{Robin Revival Corollary} is an immediate consequence of Proposition \ref{RCP}, which expresses the solution of this problem, in \eqref{Robin Revival}, as the sum of three terms:
\[
	\begin{aligned}
			u(x, t_{\mathrm{r}}) = &2\sqrt{\frac{2}{\pi}}
\langle u_{0},e^{\frac{b}{1-b}(\cdot)}\rangle_{L^{2}(0,\pi)} e^{i\frac{b^{2}}{(1-b)^{2}}t_{\mathrm{r}}} f_{1}(x) +
\mathcal{R}_2(p,q)\left[u_0^+(x)\right] \\ &+ \mathcal{R}_2(p,q)\left[2f_1*(u_0^--u_0^+))(x)\right], \quad x\in(0,\pi).
\end{aligned}
\]
The three components on the right hand side of the equation correspond to the following:
\begin{itemize}
\item[*]
The first term is a rank one perturbation and represents the contribution of the negative eigenvalue $\lambda_b$.
\item[*]
The second terms is the periodic revival of the (even extension of the) given initial condition.
\item[*]
The last term is the periodic revival of a continuous function.
\end{itemize}
As a consequence of this representation, we conclude that \eqref{Robin Problem} exhibits a {\em weaker form of revivals}. While the solution is not simply obtained as a linear combination of translated copies of the initial condition,  the second term in \eqref{Robin Revival} ensures that the {\em functional class of the initial condition is preserved at rational times}. In particular,  whenever $u_{0}$ has a finite number of jump discontinuities, then the same will be true for the solution at rational times, and the dichotomy between the solution behaviour at rational or irrational times is  present. We may say that the quantum particle that solves the linear Schr\"odinger equation with Robin boundary conditions still {\em knows the time}.

 \section*{Conclusions}\label{Conclusions}

The main goal of this work was to examine a variety of boundary conditions for the linear Schr\"odinger and  Airy equations,  and identify how the revival phenomenon depends on these boundary conditions. The starting point was the periodic case, for which it is known that the solution at rational times can be obtained as a finite linear combination of translated copies of the initial condition, and the dichotomy  between revival at rational times and fractalisation at irrational times is well established.  

We analysed pseudo-periodic conditions, which couple the two ends of the interval of definition, and Robin-type boundary conditions imposed separately at the two ends. We derived two main new results. One that establishes the constraints on the validity of the revival property for the third-order Airy equation. The other that describes a new, weaker form of revival for the case of Robin conditions. 

More specifically, we confirmed that in the second-order case of the linear Schr\"odinger equation,  every pseudo-periodic problem admits revival, by expressing its solution in terms of a purely periodic problem.  We then show, by virtue of this new expression,  that the revival property is more delicate for the third-order case of the Airy equation. In fact, it does not even hold in general for quasi-periodic boundary conditions. The rational/irrational time dichotomy, typical of  the revival phenomenon, holds in this case only for rational value of the quasi-periodicity parameter.

The particular case of Robin boundary conditions that we have chosen,   revealed a new weaker form of revival phenomenon, which is worth further investigation. In this case, while the rational/irrational time dichotomy  still holds, it is not true anymore that the solution at rational times is simply obtained by a finite linear combinations of copies of the initial profile.  It is worth highlighting that the validity of a form of revival in this case is due to the presence of one term in the solution representation that is due to a purely periodic problem. This new manifestation of  revival  complements the one recently reported in \cite{boulton2020new} for the case of periodic linear integro-differential equations. The latter displays a rational/irrational time dichotomy similar to the present one, but the representation of the solution is more involved.

Our analysis strongly support the conjecture that periodicity, and the number-theoretic properties of the purely exponential series that represent periodic solutions, are essential to any revival phenomenon.  Future work will aim to confirm this conjecture, by extending consideration to general  linear, constant coefficients boundary conditions for the both Sch\"rodinger and Airy equation. In the latter case, there exists boundary conditions for which the associate spatial operator does not admit a complete basis of eigenfunctions - an example of such conditions are the pseudo-Dirichlet conditions $u(0,t)=u(2\pi,t)=u_x(2\pi,t)=0$, see \cite{fokas2005transform, pelloni2005spectral}.  While preliminary numerical evidence suggests that at rational and irrational times the solution of this boundary value problem behaves fundamentally differently, the analysis for these types of boundary conditions requires a different approach. 

The equations we have considered are the linear part of important nonlinear equations of mathematical physics, the nonlinear Schr\"odinger and KdV equations respectively.  In work of Erdo\u{g}an, Tzirakis, Chousionis and Shakan, see \cite{erdogan2013talbot, chousionis2014fractal,erdougan2013global,erdougan2019fractal}, the dichotomy between the behaviour at rational and irrational times has been established rigorously for the periodic problem for these nonlinear equations. We expect that our result for the pseudo-periodic case would extend to the nonlinear case in an analogous manner. This would also provide theoretical foundation for recent results on the vortex filament equation with non-zero torsion \cite{de2020evolution}, a problem that can be represented in terms of the solution of a quasi-periodic problem for the Schr\"odinger equation.

 \section*{Acknowledgements}
 
We thank David Smith for his useful comments and suggestions on the contents of this paper. BP and LB are also grateful for the invitation to Yale-NUS College in January 2020 for a workshop funded by grant IG18-CW003, in which  discussions leading to part of this work began. GF is being supported by The Maxwell Institute Graduate School in Analysis and its Applications, a Centre for Doctoral Training funded by EPSRC (grant EP/L016508/01), the Scottish Funding Council, Heriot-Watt University and the University of Edinburgh.

 \addcontentsline{toc}{section}{References} 
 \printbibliography

\appendix

 \section{Numerical examples for Airy's equation}\label{Numerical Examples}
 In this first appendix, we display the numerical solutions of the quasi-periodic problem for Airy's equation  \eqref{Airy  QPP}. We  illustrate  the phenomenon of revivals and fractalisation for two choices of the quasi-periodicity parameter, one rational and one irrational. The initial condition is the piecewise constant function: $u_{0}(x)=0$ in $(0,\pi)$ and $u_{0}(x)=1$ in $(\pi,2\pi)$.
 
 In figures~\ref{Airy Rt 1/4} and \ref{Airy Vt 1/4}, we plot the profile of the solution in space variable, with quasi-periodic boundary conditions determined by $\beta=e^{2\pi i \theta}$ with $\theta\in\mathbb Q$.  In the first figure, the time is set to be rational. The re-appearance of the initial jump discontinuity is clearly seen. In the second figure,  the time is irrational. The solution has both its real and imaginary parts continuous. Indeed, the discontinuity has been smoothed out. This is consistent with Oskolkov's results \cite{oskolkov1992class}, stating that for the periodic problem for linear Schr\"{o}dinger and Airy equations, at irrational times, the solution is a continuous functions of $x$  provided the initial condition is of bounded variation.
 
  In figures~\ref{Airy Rt square} and \ref{Airy Vt square}, we plot the solution with quasi-periodic boundary conditions determined by $\beta=e^{2\pi i \theta}$ with $\theta\notin\mathbb Q$.   In this case, no discontinuities appear in the solution at any time, either rational or irrational.  This is consistent with the representation \eqref{Airy Correspondence} of the solution of this problem in terms of the solution of a periodic problem for the Schr\"odinger equation at an {\em irrational} time.
  
 \begin{figure}[H]
 \centering\includegraphics[width=0.71\textwidth]{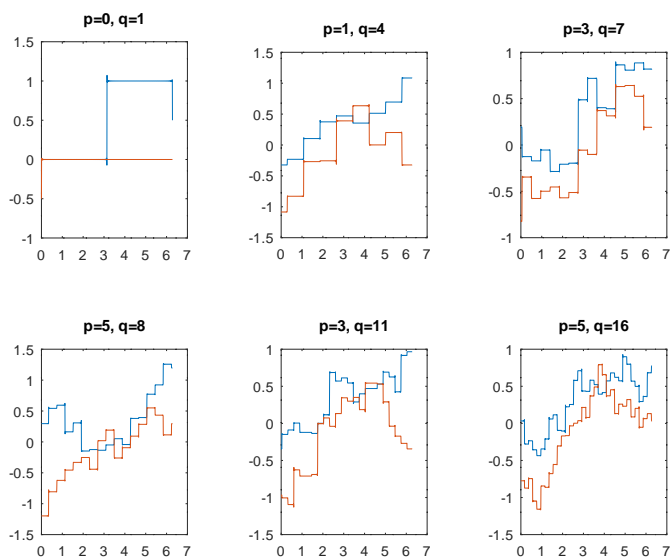}
 \caption{Real (blue) and imaginary (red) parts of the solution of Airy's problem  \eqref{Airy QPP}  with $\theta=1/4$ at rational times $t = 2\pi p/q$.}
	\label{Airy Rt 1/4}
 \end{figure}

  \begin{figure}[H]
 \centering\includegraphics[width=0.71\textwidth]{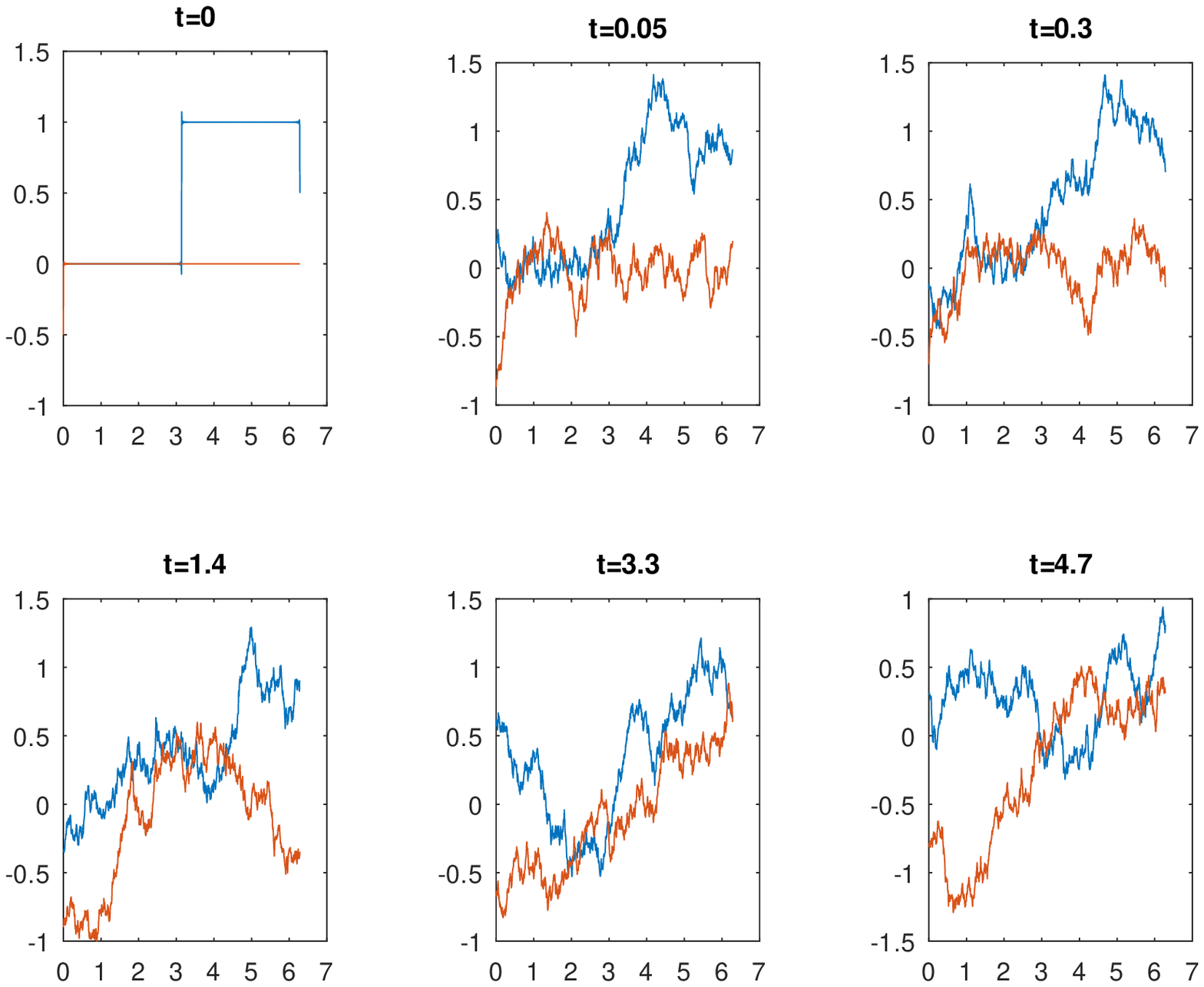}
	\caption{Real (blue) and imaginary (red) parts of the solution of Airy's problem  \eqref{Airy QPP}  with $\theta=1/4$ at generic times.}
	\label{Airy Vt 1/4}
 \end{figure} 

  \begin{figure}[H]
 \centering\includegraphics[width=0.71\textwidth]{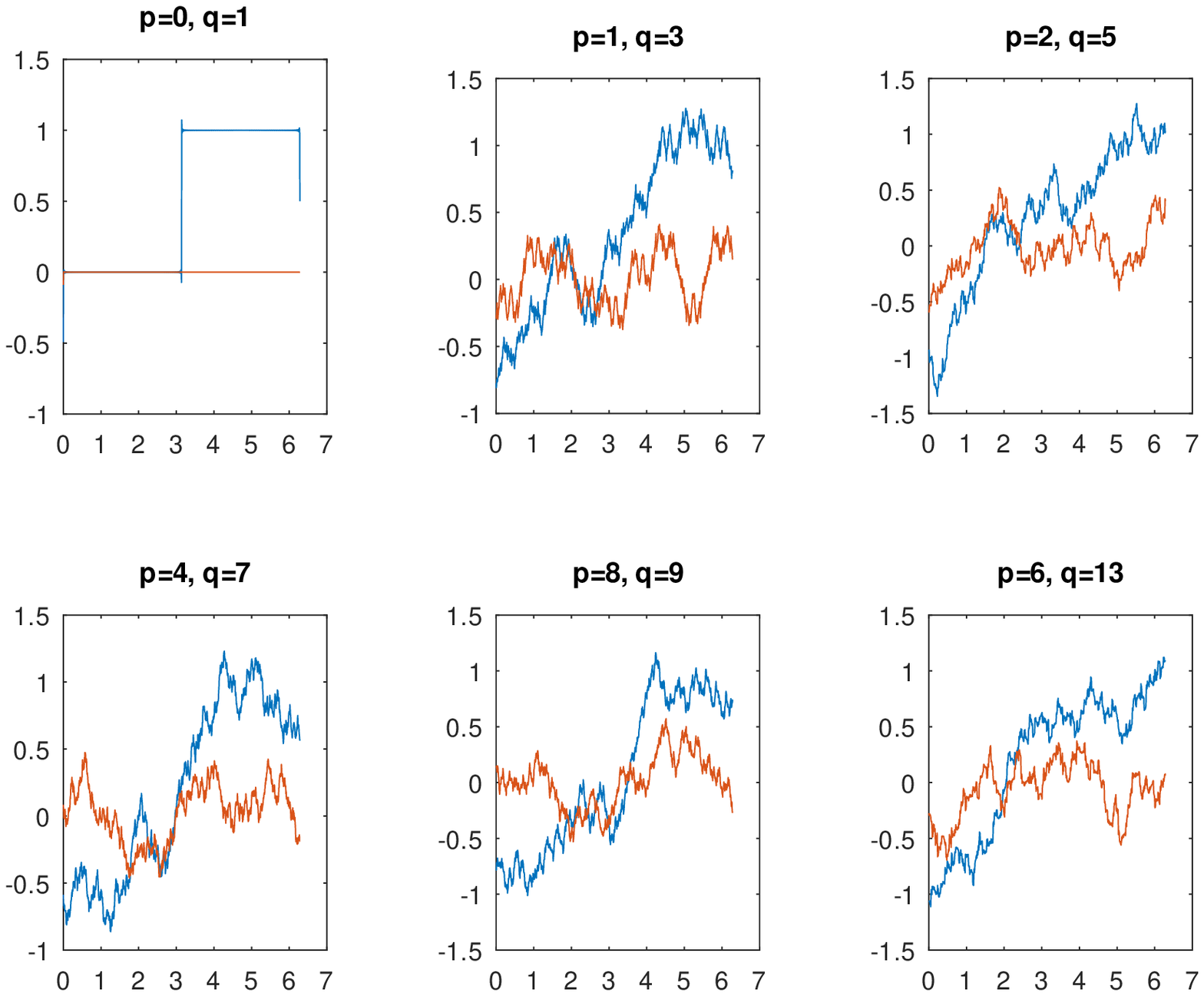}
 	\caption{Real (blue) and imaginary (red) parts of the solution of Airy's problem  \eqref{Airy QPP} with $\theta=\sqrt{2}/3$ at rational times $t = 2\pi p/q$.}
 	\label{Airy Rt square}
 \end{figure}

 \begin{figure}[H]
 \centering\includegraphics[width=0.71\textwidth]{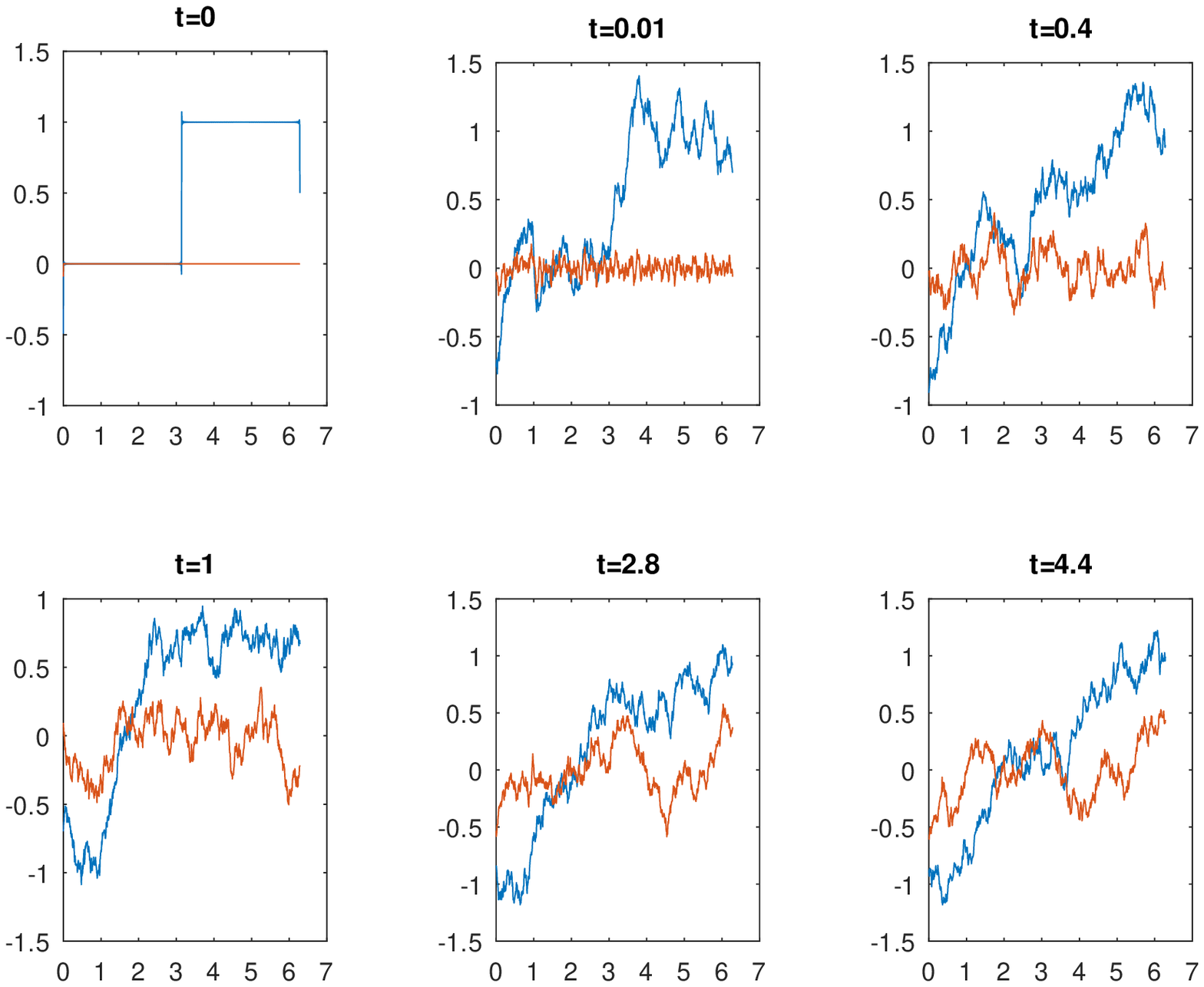}
 	\caption{Real (blue) and imaginary (red) parts of the solution of Airy's problem  \eqref{Airy QPP}  with $\theta=\sqrt{2}/3$ at generic times.}
 	\label{Airy Vt square}
 \end{figure}
 
\section{Numerical Examples for the Robin linear Schr\"{o}dinger problem}
\label{NumforA}
These final numerical experiments correspond to the equation \eqref{Robin Problem}. We take as initial condition the piecewise constant function: $u_{0}(x) = 0$ in $(0,\pi/2)$ and $u_{0}(x) = 1$ in $(\pi/2,\pi)$.
Picking different values of the parameter $b\in [0,1]$, we plot the real and imaginary part of the solution $u(x,t)$, in space, at generic and rational times.

At rational times, in figures~\eqref{robin fig 1_1} and \eqref{robin fig 2_1}, we notice that the solution evolves to, not exactly, only translations and/or reflections of the initial profile. However, the revival of the discontinuities is preserved, as predicted by Theorem~\ref{Robin Revival Corollary}. On the other hand, see figures~\ref{robin fig 1_2} and \ref{robin fig 2_2}, at generic times the solution profile is clear of discontinuities.
 \begin{figure}[H]
 \centering\includegraphics[width=0.71\textwidth]{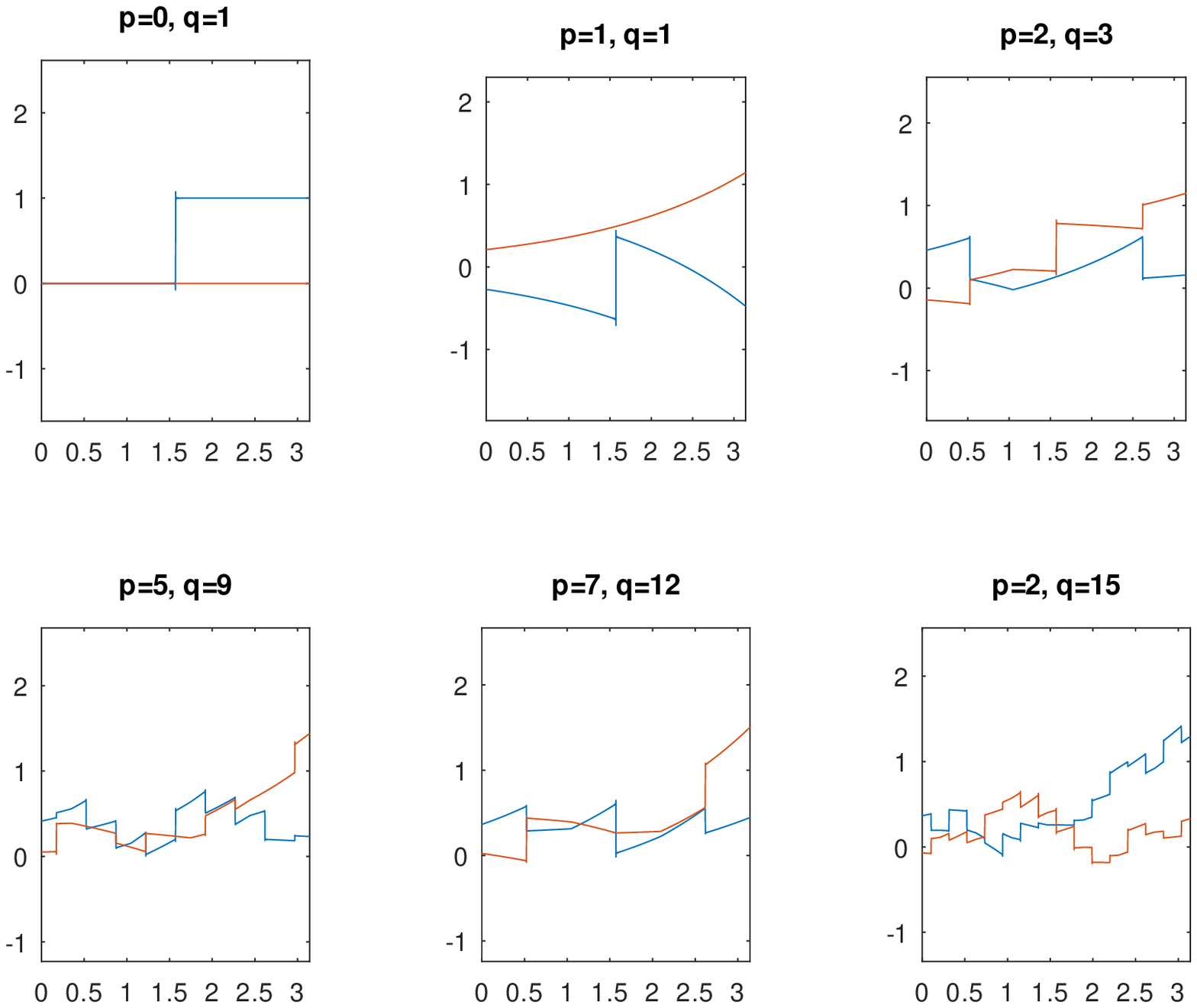}
	\caption{Real (blue) and imaginary (red) part of the solution of Robin's problem \eqref{Robin Problem} with $b=0.35$ at rational times $t=2\pi p /q$.}
	\label{robin fig 1_1}
\end{figure}

 \begin{figure}[H]
 \centering\includegraphics[width=0.71\textwidth]{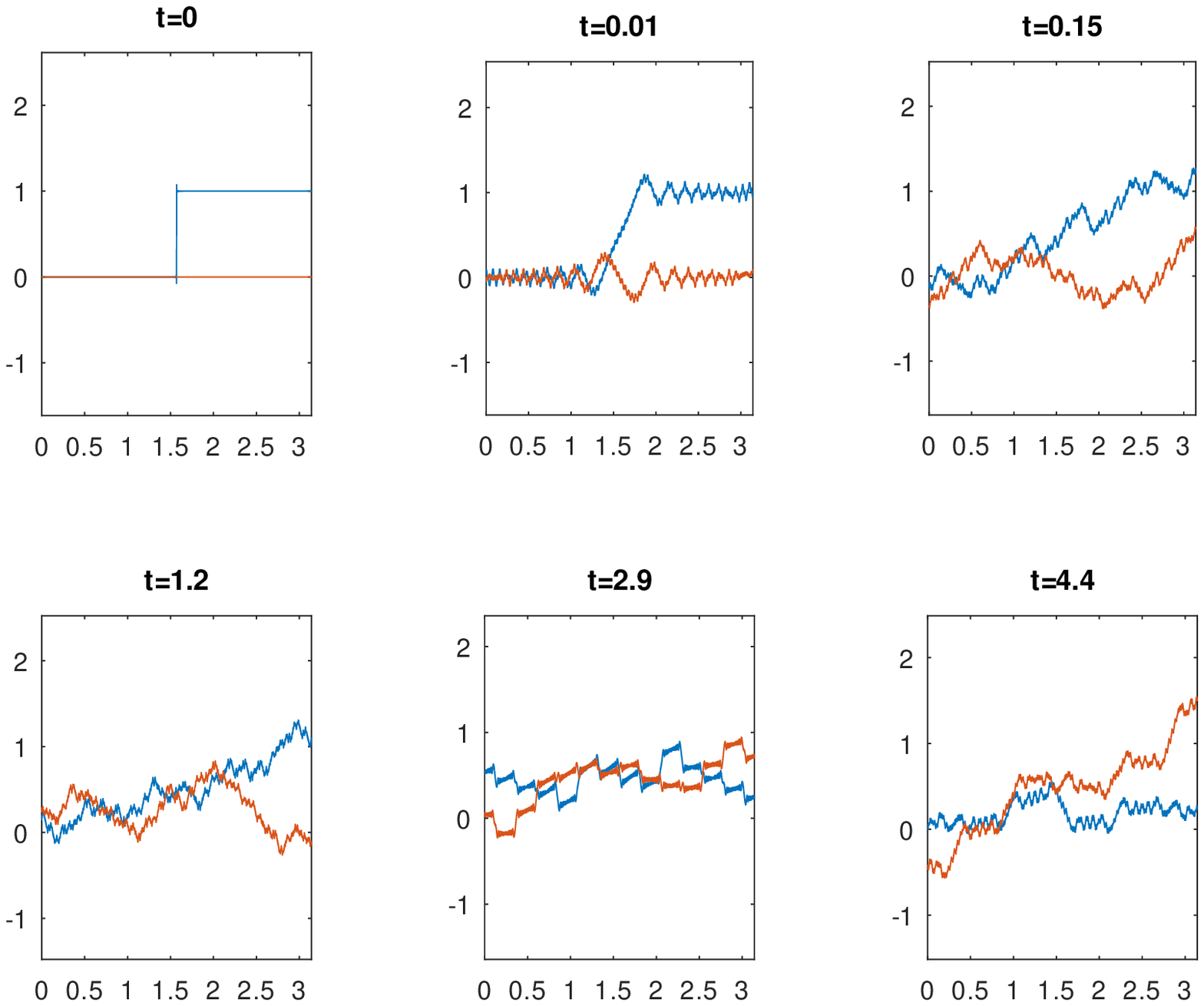}
	\caption{Real (blue) and imaginary (red) parts of the solution of Robin's problem \eqref{Robin Problem} with $b=0.35$ at generic times.}
	\label{robin fig 1_2}
\end{figure}

 \begin{figure}[H]
 \centering\includegraphics[width=0.71\textwidth]{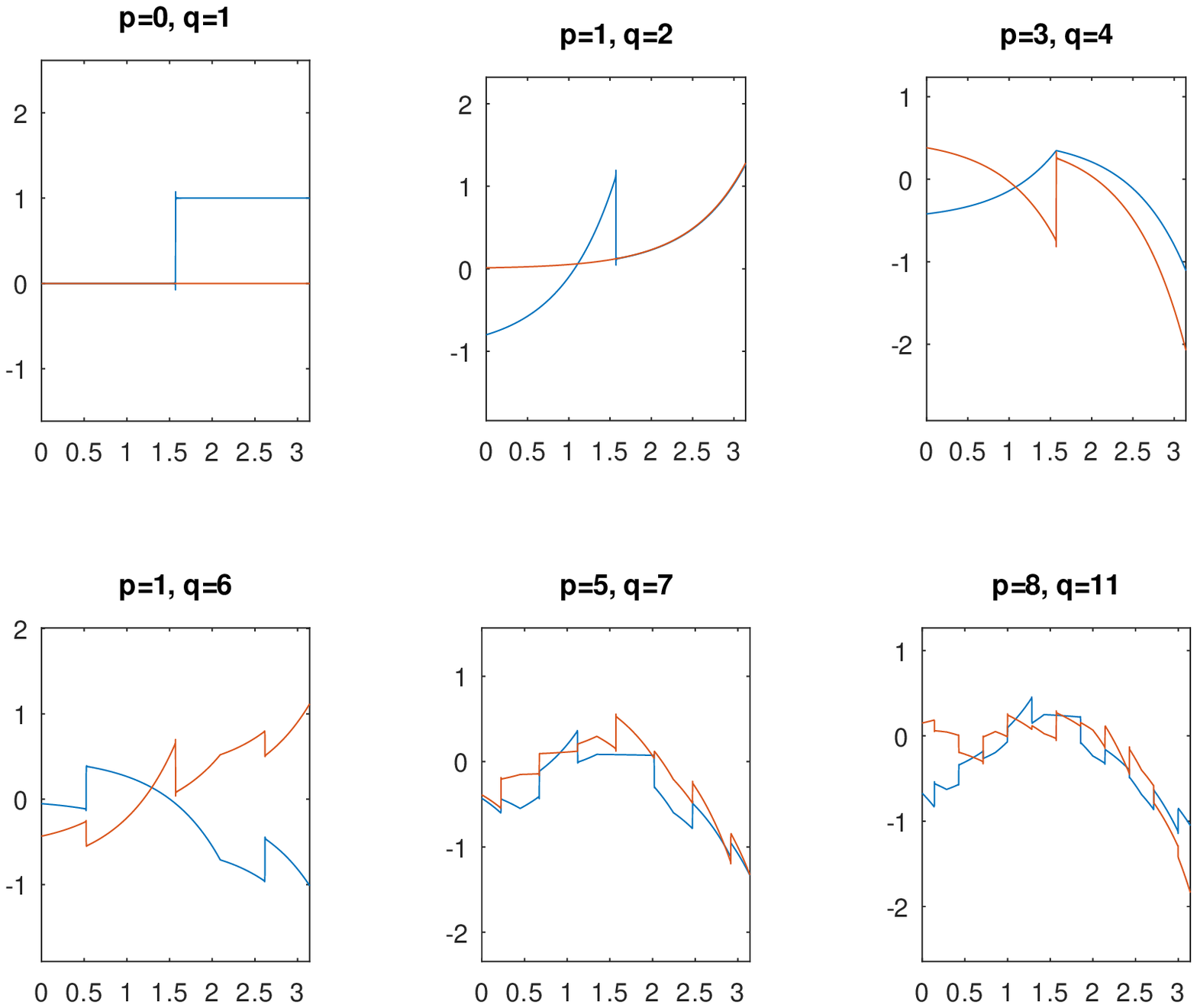}
	\caption{Real (blue) and imaginary (red) parts of the solution of Robin's problem \eqref{Robin Problem} with $b = 0.6$ at rational times $t=2\pi p /q$.}
	\label{robin fig 2_1}
\end{figure}

 \begin{figure}[H]
 \centering\includegraphics[width=0.71\textwidth]{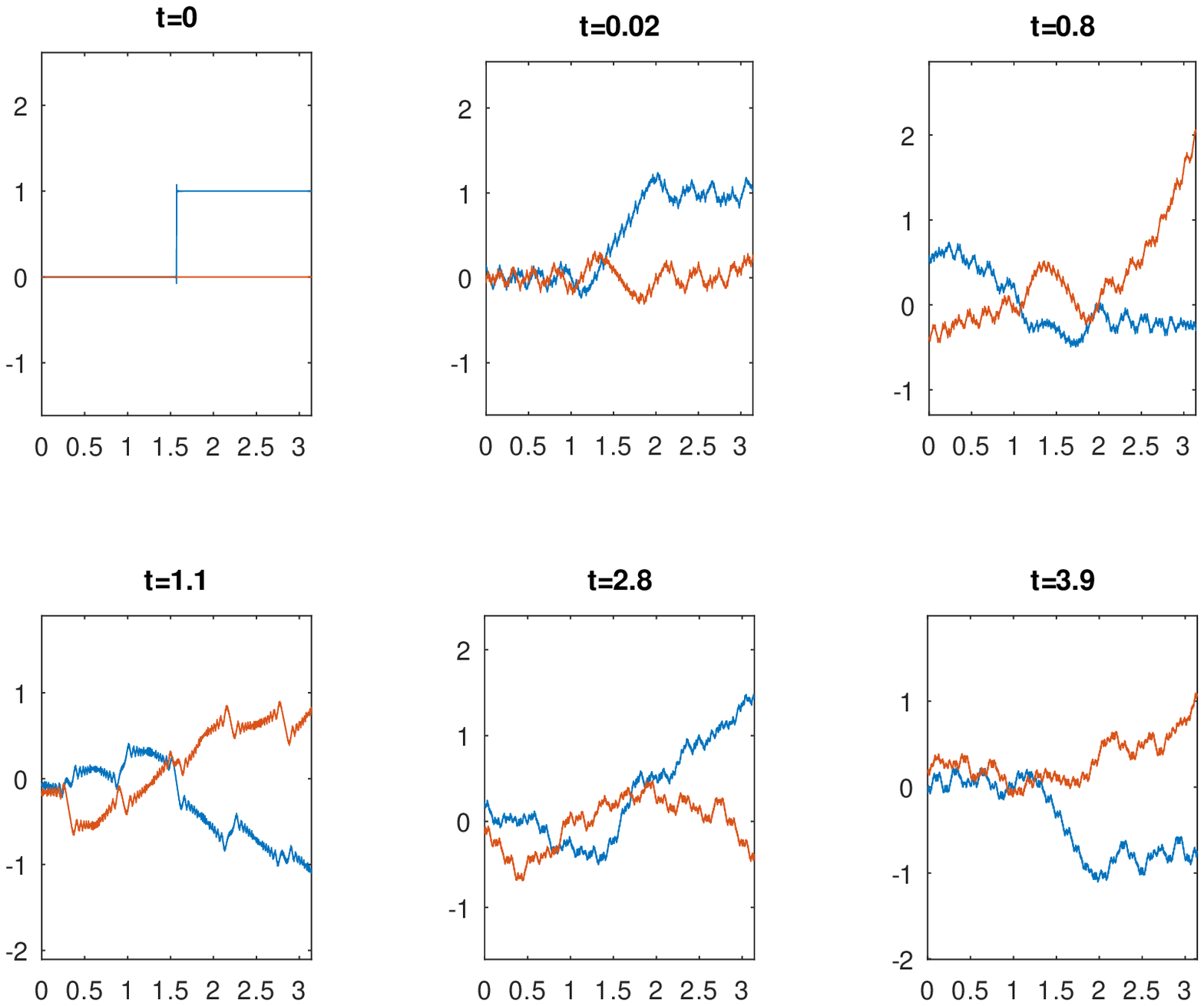}
	\caption{Real (blue) and imaginary (red) parts of the solution of Robin's problem \eqref{Robin Problem} with $b = 0.6$ at generic times.}
	\label{robin fig 2_2}
\end{figure}

\end{document}